\newtheorem{theorem}{Theorem}[section]
\newtheorem{corollary}[theorem]{Corollary}
\newtheorem{prop}[theorem]{Proposition}
\theoremstyle{definition}
\newtheorem{example}[theorem]{Example}
\theoremstyle{definition}
\newtheorem{obs}{Remark}
\theoremstyle{definition}
\newtheorem{definition}[theorem]{Definition}
\DeclareMathOperator{\R}{\mathbb{R}}
\DeclareMathOperator{\N}{\mathbb{N}}
\newcommand{\p}{p(\cdot)}
\newcommand{\q}{q(\cdot)}
\newcommand{\Lp}{L^{\p}(\R)}
\newcommand{\BV}{\mathcal{B}V}
\newcommand{\defeq}{\vcentcolon=}
\newcommand{\LH}{\operatorname{LH}_\infty^1}
\date{\today}
\title[The Fourier transform in variable exponent Lebesgue spaces]{The Fourier transform in variable exponent Lebesgue spaces}
\author{André Kowacs \orcidlink{https://orcid.org/0000-0001-8784-3147}}
 \address{Universidade Federal do Paran\'{a}, 
	Departamento de Matem\'{a}tica,
	C.P.19096, CEP 81531-990, Curitiba, Brazil}
\email{andrekowacs@gmail.com}
 \author{Wagner Augusto Almeida de Moraes \orcidlink{https://orcid.org/0000-0002-5624-7374}}
 \address{Universidade Federal do Paran\'{a}, 
	Departamento de Matem\'{a}tica,
	C.P.19096, CEP 81531-990, Curitiba, Brazil}
\email{wagnermoraes@ufpr.br}
 \subjclass{Primary: 42A38, 46E30. Secondary: 26A42, 46B04}
\keywords{Variable Lebesgue space, Fourier transform, tempered distribution, continuous primitive integral, Banach space}
\begin{document}
\begin{abstract}
   In this work we define a Fourier transform for each $f\in L^{\p}(\R)$, for a large class of exponent functions $\p$, as the distributional derivative of a Hölder continuous function. A norm is defined in the space of such Fourier transforms so that it is isometrically isomorphic to $\Lp$. We also prove several properties of this Fourier transform, such as inversion in norm and an exchange theorem.
\end{abstract}
\maketitle
\keywords

\section{Introduction}

The variable exponent Lebesgue spaces have been an area of intensive study in the last 30 years, with historical roots tracing back to the work of Orlicz in the 1930s on modular spaces \cite{Orlicz1932}, which laid the groundwork for generalizations beyond constant exponents. The modern formulation of these spaces, particularly the variable exponent Lebesgue spaces $L^{p(\cdot)}(\Omega)$, where $\Omega\subset \R^n$, was significantly advanced by Kováčik and Rákosník \cite{KovacikRakosnik1991} and Cruz-Uribe and Fiorenza \cite{Cruz-Uribe}. These spaces can be seen as a natural generalization of the classical Lebesgue spaces. Roughly speaking, the usual constant parameter $p\in [1,\infty]$ in the definition of $L^p({\Omega})$ is replaced by a measurable function $p(\cdot):\mathbb{R}^n\to [1,\infty]$. Although this generalization retains many properties of the classical Lebesgue spaces, the variable exponent Lebesgue spaces also display some very different phenomena depending on the exponent function $p(\cdot)$. Therefore extending results from $L^p$ to $L^{p(\cdot)}$ is in general a highly non-trivial problem and involves different techniques, a fact which has served as motivation for several papers in the last decades (see for instance  \cite{Chamorro, CUHM16, Diening2004, Diening2004b, Sam98}, and references therein).

With this motivation, in the present work we extend several results from Talvila \cite{Talvila2025}. In that paper, the author introduced an alternative definition of the Fourier transform for functions $f \in L^p(\mathbb{R})$, $1\leq p<\infty$, establishing that the classical Fourier transform (seen as a tempered distribution) coincides with the distributional derivative of a Hölder continuous function. The work also develops additional properties of the Fourier transform $\widehat{f}$ via the theory of the continuous primitive integral, including exchange theorems and norm inversion results.

It is worth mentioning that when restricted to constant exponents, our results coincide precisely with the ones obtained in \cite{Talvila2025}. Moreover, while many papers on variable exponent Lebesgue spaces restrict to the case where the exponent function $p(\cdot)$ assumes values in $[1,\infty)$, in this paper we consider the full range of values $[1,\infty]$, which allows us to obtain more general results. Notably, when the essential supremum of $p(\cdot)$ is infinite, the analysis diverges significantly from the constant exponent case (see, for example, \cite{Hasto2009}). This is due to some characteristics of the structure of $L^\infty(\mathbb{R})$, such as the lack of certain approximation properties and the failure of density results, requiring distinct techniques compared to finite exponent scenarios.  

The paper is structured as follows: Section \ref{section_preliminaries} reviews the fundamental theory of variable exponent Lebesgue spaces,  and fix the notation that will be used throughout the paper. Section \ref{sec-prop-psi} introduces a key integral kernel function for Fourier analysis, establishing its regularity under diverse exponent conditions through Hölder and Lipschitz continuity estimates. In Section \ref{sec-fourier-transf} we define two Banach spaces, while describing the Fourier transform rigorously as a distributional derivative, and show that these spaces are isometrically isomorphic to $L^{\p}(\mathbb{R})$. Section \ref{sec-prop-fourier} derives operator calculus for this transform, establishing its consistency with tempered distributions and invariance under translation operations. Finally, Section \ref{sec-integration-fourier} develops an integration framework for Fourier transforms, proving both exchange theorems and norm-convergent inversion formulas that extend classical Fourier analysis.

\section{Preliminaries}\label{section_preliminaries}

    For the main theory and results listed in this section, we refer to \cite{Cruz-Uribe}. Throughout this paper, we fix the usual Lebesgue measure on the real line.   Given a set $\Omega\subset \R^n$, let $\mathcal{P}(\Omega)$ denote the set of all measurable functions $p:\Omega\to[1,\infty]$. These are called exponent functions, and to distinguish between variable and constant exponents, we will denote them by $\p$. We also fix the following notations:
    \begin{equation*}
        \Omega_1^{\p}=\{x\in\Omega : p(x)=1\},
        \end{equation*}
        \begin{equation*}
                    \Omega_*^{\p}=\{x\in\Omega : 1<p(x)<\infty\},
        \end{equation*}
\begin{equation*}
        \Omega_\infty^{\p}=\{x\in\Omega : p(x)=\infty\},
    \end{equation*}
        \begin{equation*}
        p_{-}=\underset{{x\in \Omega}}{\text{ess inf}}\{p(x)\},
    \end{equation*}
    \begin{equation*}
        p_{+}=\underset{{x\in \Omega}}{\text{ess sup}}\{p(x)\}.
    \end{equation*}
    Given $\p\in\mathcal{P}(\Omega)$, its conjugate exponent $\q\in\mathcal{P}(\Omega)$ is defined by 
    \begin{equation*}
        \frac{1}{p(x)}+\frac{1}{q(x)}=1,\quad x\in\Omega.
    \end{equation*}
    In this paper, $\q$ will always denote the respective  conjugate exponent of any given $\p$.

Note that as a consequence of these definitions, the following symmetric identities always hold:
\begin{equation*}
    \frac{1}{p_{+}}+\frac{1}{q_{-}}=1,
\end{equation*}
\begin{equation*}
     \frac{1}{p_{-}}+\frac{1}{q_{+}}=1.
\end{equation*}
    
A key condition for the exponent functions which ensures important properties for the $L^{\p}(\Omega)$ spaces (see \cite{heat_equation,log-2019,key-estimate}), is the so called log-Hölder continuity. An exponent $\p\in\mathcal{P}(\Omega)$ is locally log-Hölder continuous if  there exists a constant $c>0$ such that
\begin{equation*}
    |p(x)-p(y)|\leq \frac{c}{\log(e+1/|x-y|)},
\end{equation*}
for every $x,y\in\Omega$. Also, $\p$ is log-Hölder continuous at infinity if there exist constants $C,p_{\infty}>0$ such that
\begin{equation}\label{ineq_log-holder}
    |p(x)-p_{\infty}|\leq \frac{C}{\log(e+|x|)},
\end{equation}
for every $x\in\Omega$. If $\frac{1}{\p}$ is log-Hölder continuous at infinity, we say that it satisfies the log-Hölder decay condition.  In this paper we will assume only that \eqref{ineq_log-holder} holds for almost every $x\in\R$.

Given $\p \in \mathcal{P}(\Omega)$ and  $\Omega'\subset\Omega$ the modular $\rho_{\p,\Omega'}$ associated with $\p$ denotes the functional defined on Lebesgue measurable functions given by
\begin{equation*}
    \rho_{\p,\Omega'}(f)=\int_{\Omega'\backslash\Omega_\infty^{\p}}|f(x)|^{p(x)}dx+\|f\|_{L^{\infty}(\Omega'\cap \Omega_\infty^{\p})}\in [0,\infty].
\end{equation*}
Often during this work $\Omega'=\Omega$, and we abbreviate the modular by $\rho_{\p}(f)$.

The variable exponent Lebesgue space $L^{\p}(\Omega)$ is then defined as the set of measurable functions $f$ such that $\rho_{\p}(f/\lambda)<\infty$, for some $\lambda>0$, with the usual convention that two functions are ``equal" if they coincide almost everywhere.

The norm in $L^{\p}$ is then defined as follows. Given $\Omega\subset\R^n$ and $\p\in\mathcal{P}(\Omega)$, define
\begin{equation*}
    \|f\|_{L^{\p}(\Omega)}=\inf\{\lambda>0:\rho_{\p,\Omega}(f/\lambda)\leq 1\},
\end{equation*}
for every Lebesgue measurable function $f$, with the standard convention that $\inf\varnothing=\infty$. When the set $\Omega$ is implicit, we will often abbreviate the norm by $\|f\|_{\p}$.

\begin{obs}\label{remark_no_lambda}
    Notice that $\|f\|_{L^{\p}(\Omega)}<\infty$ for every $f\in L^{\p}(\Omega)$. In particular, if $\rho_{\p}(f)<\infty$, then $\|f\|_{L^{\p}(\Omega)}<\infty$ as well.
\end{obs}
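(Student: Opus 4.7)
The plan is to show that from any single witness $\lambda>0$ with $\rho_{\p}(f/\lambda)<\infty$ one can construct a larger witness $\mu$ realizing $\rho_{\p}(f/\mu)\leq 1$, so that the infimum defining $\|f\|_{L^{\p}(\Omega)}$ is finite and indeed bounded by $\mu$. The key quantitative observation is a scaling estimate for the modular: if $c\geq 1$, then since $p(x)\geq 1$ one has $c^{-p(x)}\leq c^{-1}$ on $\Omega\setminus\Omega_\infty^{\p}$, while the $L^\infty$ piece scales by $c^{-1}$ directly. Hence
\[
\rho_{\p}\!\left(\frac{f}{c\lambda}\right)
\;=\;\int_{\Omega\setminus\Omega_\infty^{\p}}\frac{|f(x)|^{p(x)}}{c^{p(x)}\lambda^{p(x)}}\,dx
\;+\;\frac{1}{c\lambda}\,\|f\|_{L^\infty(\Omega\cap\Omega_\infty^{\p})}
\;\leq\;\frac{1}{c}\,\rho_{\p}\!\left(\frac{f}{\lambda}\right).
\]

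Set $M:=\rho_{\p}(f/\lambda)<\infty$. If $M\leq 1$ then $\mu=\lambda$ already witnesses $\rho_{\p}(f/\mu)\leq 1$, so $\|f\|_{L^{\p}(\Omega)}\leq\lambda$. Otherwise choose $c=M\geq 1$ and $\mu=c\lambda=M\lambda$; the displayed inequality gives $\rho_{\p}(f/\mu)\leq M/c=1$, whence $\|f\|_{L^{\p}(\Omega)}\leq M\lambda<\infty$. In either case $\|f\|_{L^{\p}(\Omega)}$ is finite, proving the first assertion. For the ``in particular'' clause, if $\rho_{\p}(f)<\infty$ then taking $\lambda=1$ shows $f\in L^{\p}(\Omega)$, and the first part applies.

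I do not anticipate any real obstacle: the only subtlety is that the modular is not homogeneous in $f$ (because of the variable exponent in the integral piece), but the one-sided scaling inequality $c^{-p(x)}\leq c^{-1}$ for $c\geq 1$, $p(x)\geq 1$, is exactly what makes the ``multiply $\lambda$ by a large constant'' trick succeed. One should just be a little careful to handle the $L^\infty$ contribution on $\Omega_\infty^{\p}$ separately, as is done above, since that term does not involve the exponent $p(x)$ at all.
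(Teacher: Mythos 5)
Your proof is correct: the one-sided scaling inequality $c^{-p(x)}\leq c^{-1}$ for $c\geq 1$ (valid since $p(x)\geq 1$), applied separately to the integral and $L^\infty$ pieces of the modular, is exactly the standard argument, and the paper itself states this remark without proof, deferring to the foundational theory in its reference on variable Lebesgue spaces. Nothing is missing.
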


The variable exponent Lebesgue spaces extend the usual Lebesgue spaces, and share similar properties. In particular, the variable exponent Lebesgue space norm satisfies a generalization of Hölder's inequality, presented below. First, we introduce a notation to simplify several statements throughout this paper.

\begin{definition}
    For a Lebesgue measurable set $E\subset \R^n$, denote by $|E|$ its Lebesgue measure. Define $\delta(E)=\|\chi_{E}\|_{L^\infty(\R^n)}$, that is, $\delta(E)=0$ if $|E|=0$ and $\delta(E)=1$, otherwise.
\end{definition}

\begin{theorem}[Generalized Hölder's Inequality]
    Given $\Omega\subset\R^n$ and $\p\in\mathcal{P}(\Omega)$, for all $f\in L^{\p}(\Omega)$ and $g\in L^{\q}(\Omega)$, $fg\in L^1(\Omega)$ and 
    \begin{equation*}
        \int_\Omega|f(x)g(x)|dx\leq K_{\p}\|f\|_{\p}\|g\|_{\q},
    \end{equation*}
    where 
    \begin{equation*}
        K_{\p}=\left(\frac{1}{p_{-}}-\frac{1}{p_{+}}+1\right)\delta\left(\Omega_*^{\p}\right)+\delta\left(\Omega_\infty^{\p}\right)+\delta\left(\Omega_1^{\p}\right),
    \end{equation*}
    and $\q\in\mathcal{P}(\Omega)$ is the conjugate exponent of $\p$. 
\end{theorem}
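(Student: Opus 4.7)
The plan is to normalize and decompose $\Omega$ into the three pairwise disjoint pieces $\Omega_1^{p(\cdot)}$, $\Omega_*^{p(\cdot)}$, $\Omega_\infty^{p(\cdot)}$, estimating the integral on each separately and summing. By homogeneity I may assume $f,g\geq 0$ and, after dividing by the norms (the zero-norm case being trivial), normalize so that $\|f\|_{p(\cdot)}=\|g\|_{q(\cdot)}=1$; the goal then reduces to proving $\int_\Omega fg\,dx\leq K_{p(\cdot)}$. A preliminary fact I would invoke is that this normalization forces $\rho_{p(\cdot)}(f)\leq 1$ and $\rho_{q(\cdot)}(g)\leq 1$: indeed, the definition of the Luxemburg norm as an infimum gives $\rho(f/\lambda)\leq 1$ for every $\lambda>1$, and letting $\lambda\to 1^+$ together with monotone convergence on $\Omega\setminus\Omega_\infty^{p(\cdot)}$ and the monotone behavior of the $L^\infty$-part on $\Omega_\infty^{p(\cdot)}$ yields the bound at $\lambda=1$.

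On $\Omega_1^{p(\cdot)}$, where $p(x)=1$ and correspondingly $q(x)=\infty$, I would estimate
\begin{equation*}
\int_{\Omega_1^{p(\cdot)}} fg\,dx \leq \|g\|_{L^\infty(\Omega_1^{p(\cdot)})}\int_{\Omega_1^{p(\cdot)}} f\,dx,
\end{equation*}
and observe that the first factor is at most $\|g\|_{L^\infty(\Omega_\infty^{q(\cdot)})}\leq \rho_{q(\cdot)}(g)\leq 1$ (using $\Omega_1^{p(\cdot)}=\Omega_\infty^{q(\cdot)}$), while the second is a summand of $\rho_{p(\cdot)}(f)$ and hence also bounded by $1$; this piece therefore contributes $\delta(\Omega_1^{p(\cdot)})$. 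A symmetric argument on $\Omega_\infty^{p(\cdot)}$ contributes $\delta(\Omega_\infty^{p(\cdot)})$.

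On $\Omega_*^{p(\cdot)}$, where $1<p(x),q(x)<\infty$, I would apply the pointwise Young inequality $ab\leq a^{p(x)}/p(x)+b^{q(x)}/q(x)$ together with the uniform bounds $p(x)\geq p_-$ and $q(x)\geq q_-$ to get
\begin{equation*}
\int_{\Omega_*^{p(\cdot)}}fg\,dx \leq \frac{1}{p_-}\int_{\Omega_*^{p(\cdot)}} f^{p(x)}dx + \frac{1}{q_-}\int_{\Omega_*^{p(\cdot)}} g^{q(x)}dx \leq \frac{1}{p_-}+\frac{1}{q_-},
\end{equation*}
since each of the last two integrals is dominated by the respective modular. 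The symmetric identity $\tfrac{1}{p_+}+\tfrac{1}{q_-}=1$ from the preliminaries rewrites the right-hand side as $\tfrac{1}{p_-}-\tfrac{1}{p_+}+1$, which is precisely the stated coefficient of $\delta(\Omega_*^{p(\cdot)})$. Summing the three regional contributions yields $K_{p(\cdot)}$, and the integrability of $fg$ follows a posteriori from the finiteness of each summand.

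I expect the main technical nuisance to be the justification of $\rho_{p(\cdot)}(f)\leq 1$ from $\|f\|_{p(\cdot)}=1$ when $p_+=\infty$, because the modular is no longer a single convex integral and one must handle the $L^\infty$-part on $\Omega_\infty^{p(\cdot)}$ separately from the integral part on $\Omega\setminus\Omega_\infty^{p(\cdot)}$; this is routine but relies on splitting strictly along the definition of $\rho_{p(\cdot)}$. A secondary bookkeeping point is that the $\delta(\cdot)$ factors make the estimate automatically vacuous whenever the corresponding set has measure zero, so no separate case analysis for degenerate configurations of $p(\cdot)$ is required.
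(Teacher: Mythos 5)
Your proof is correct. The paper itself gives no proof of this theorem --- it is quoted from the reference \cite{Cruz-Uribe} (where it appears as Theorem 2.26) --- and your argument (normalize so that both norms equal $1$, deduce $\rho_{\p}(f)\leq 1$ and $\rho_{\q}(g)\leq 1$, split $\Omega$ into $\Omega_1^{\p}$, $\Omega_*^{\p}$, $\Omega_\infty^{\p}$, and apply pointwise Young's inequality with the bounds $1/p(x)\leq 1/p_{-}$, $1/q(x)\leq 1/q_{-}$ on the middle piece) is precisely the standard proof given there, with the identity $1/p_{+}+1/q_{-}=1$ converting $1/p_{-}+1/q_{-}$ into the stated coefficient.
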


\section{Properties of \texorpdfstring{$\Psi_f$}{}}\label{sec-prop-psi}
In this section we define, for each $f\in\Lp$, an auxiliary function $\Psi_f:\R\to\R$ which will be essential in defining the Fourier transform of $f$. We also prove several properties of this function $\Psi_f$, based on properties of the exponent $\p$.

\begin{definition}
    Let $f\in L^{\p}(\R)$, for some $\p\in\mathcal{P}(\R)$. Define
    \begin{equation*}
        \Psi_f(s)\defeq\int_{-\infty}^{\infty}\left(\frac{1-e^{-ist}}{it}\right)f(t)dt,
    \end{equation*}
    for $s\in\R\backslash\{0\}$, whenever the integral is well defined, and $\Psi_f(0)=0$. For $1<q<\infty$, set
    \begin{equation}
      C_q= 4^{1/q}\left(\int_0^{\infty}\left|\frac{\sin(y)}{y}\right|^{q}dy\right)^{1/q}<\infty,
    \end{equation}
    and $C_\infty=1$. Also let 
    \begin{equation*}
            u_s(t)\defeq\frac{1-e^{ist}}{it}, 
    \end{equation*}
for every $s,t\in\R\backslash\{0\}$.
\end{definition}

\begin{prop}\label{prop_bounded_-_+}
    Suppose that $p(\cdot)\in \mathcal{P}(\R)$  satisfies $1<p_{-}, p_{+}<\infty$. Then for any $f\in L^{p(\cdot)}(\R)$ we have that
    \begin{equation}\label{ineq_Psi_bounded}
        |\Psi_f(s)|\leq K_{p(\cdot)}\|f\|_{p(\cdot)}\max\left\{\left(C_{q_{-}}\right)^{q_{-}/q_{+}}|s|^{\frac{1}{p_{-}}},C_{q_{-}}|s|^{\frac{1}{p_{+}}}\right\},
    \end{equation}
    for every $s\in\R$.
\end{prop}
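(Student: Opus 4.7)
The plan is to combine the generalized Hölder inequality with a modular estimate for the variable exponent $L^{\q}$-norm of the kernel $u_{-s}(t) = (1-e^{-ist})/(it)$ appearing in the definition of $\Psi_f$.

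First I would apply the generalized Hölder inequality to the integral defining $\Psi_f$, yielding $|\Psi_f(s)| \leq K_{\p}\|f\|_{\p}\|u_{-s}\|_{\q}$ and reducing the problem to an $L^{\q}$-norm estimate. Since $1 < p_{-}\leq p_{+} < \infty$ implies $1 < q_{-}\leq q_{+} < \infty$, the sets $\R_{1}^{\q}$ and $\R_{\infty}^{\q}$ are negligible, and the modular $\rho_{\q}$ is simply the integral of $|\cdot|^{q(t)}$.

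Next I would compute the constant-exponent norms of $u_{-s}$ explicitly. Using $|u_{-s}(t)| = 2|\sin(st/2)|/|t|$ and substituting $y = st/2$, one gets $\|u_{-s}\|_{L^{q}(\R)}^{q} = 4|s|^{q-1}\int_{0}^{\infty}|\sin(y)/y|^{q}\,dy = C_{q}^{q}|s|^{q-1}$, hence $\|u_{-s}\|_{L^{q}(\R)} = C_{q}|s|^{1/p}$ for any constant conjugate pair. Since $|\sin(y)/y|\leq 1$, the map $q\mapsto\int_{0}^{\infty}|\sin(y)/y|^{q}dy$ is decreasing, so $C_{q_{+}}^{q_{+}}\leq C_{q_{-}}^{q_{-}}$, equivalently $C_{q_{+}}\leq (C_{q_{-}})^{q_{-}/q_{+}}$; therefore $\|u_{-s}\|_{L^{q_{-}}(\R)} = C_{q_{-}}|s|^{1/p_{+}}$ and $\|u_{-s}\|_{L^{q_{+}}(\R)} \leq (C_{q_{-}})^{q_{-}/q_{+}}|s|^{1/p_{-}}$, which are precisely the two quantities inside the max in the statement.

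Finally, with $M$ equal to the claimed maximum and $h = u_{-s}/M$, I would verify $\rho_{\q}(h)\leq 1$ by splitting $\R = A\cup B$ according to $|h|\leq 1$ or $|h|>1$, using $|h|^{q(t)}\leq |h|^{q_{-}}$ on $A$ and $|h|^{q(t)}\leq |h|^{q_{+}}$ on $B$, and invoking the conjugacy identity $q_{\pm}/p_{\mp} = q_{\pm}-1$ to match powers. The main obstacle is showing that the resulting sum of modular pieces is bounded by $1$ rather than the naive $2$ one gets from bounding each piece by the full-line $L^{q_{\pm}}$-norm. I anticipate treating the two regimes $|s|\leq C_{q_{-}}^{q_{-}}$ and $|s| > C_{q_{-}}^{q_{-}}$ separately: in the first, $M\geq |s|\geq |u_{-s}|$, so $B=\varnothing$ and the $L^{q_{-}}$-estimate alone, which equals $1$ by the choice of $M$, suffices; in the second, the $L^{q_{+}}$-estimate on $B$ gives the main contribution while the integral over $A$ is controlled by the favorable scaling in $|s|$.
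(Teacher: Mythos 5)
Your overall outline matches the paper's: apply the generalized H\"older inequality, reduce to estimating $\|u_s\|_{q(\cdot)}$ via the modular, and split at the threshold $|s|=(C_{q_-})^{q_-}$. Your small-$|s|$ regime is correct and is essentially the paper's argument: there $M\geq C_{q_-}|s|^{1/p_+}\geq|s|\geq|u_s(t)|$, so $|h|\leq 1$ everywhere and the single bound $|h|^{q(t)}\leq|h|^{q_-}$ closes the estimate. The explicit computation $\|u_s\|_{L^q}=C_q|s|^{1/p}$ and the monotonicity $C_{q_+}^{q_+}\leq C_{q_-}^{q_-}$ are also correct and consistent with what the paper uses.

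The gap is in the large-$|s|$ regime, and you have correctly located it but not closed it. With $h=u_s/M$ and the split $\R=A\cup B$ by $|h|\lessgtr 1$, the bounds $\int_A|h|^{q_-}\leq\|h\|_{L^{q_-}}^{q_-}$ and $\int_B|h|^{q_+}\leq\|h\|_{L^{q_+}}^{q_+}$ each integrate over the \emph{whole} line, so you are double-counting: with $M=(C_{q_-})^{q_-/q_+}|s|^{1/p_-}$ one computes $\|h\|_{L^{q_+}}^{q_+}=C_{q_+}^{q_+}/C_{q_-}^{q_-}\leq 1$ (independent of $|s|$) and $\|h\|_{L^{q_-}}^{q_-}=\bigl(|s|/C_{q_-}^{q_-}\bigr)^{q_-/q_+-1}\leq 1$, and at $|s|=(C_{q_-})^{q_-}$ the second bound equals $1$ exactly, so the ``favorable scaling'' does not rescue you near the threshold; the sum is only bounded by $2$. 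That yields $\rho_{q(\cdot)}(u_s/M)\leq 2$, hence $\|u_s\|_{q(\cdot)}\leq 2M$ after rescaling --- the stated inequality with a spurious factor, which defeats the point of recovering the constant-exponent constants. The paper avoids the split entirely: writing the modular after the substitution $x=st$ as $\int\bigl|\tfrac{\sin(x/2)}{x/2}\bigr|^{q(x/s)}\bigl(\tfrac{s}{\lambda}\bigr)^{q(x/s)}\,\tfrac{dx}{s}$ and taking $\lambda\leq s$, it bounds the two factors with \emph{different} extremal exponents pointwise, $\bigl|\tfrac{\sin(x/2)}{x/2}\bigr|^{q(x/s)}\leq\bigl|\tfrac{\sin(x/2)}{x/2}\bigr|^{q_-}$ (base $\leq 1$) and $\bigl(\tfrac{s}{\lambda}\bigr)^{q(x/s)}\leq\bigl(\tfrac{s}{\lambda}\bigr)^{q_+}$ (base $\geq 1$), producing the single term $(C_{q_-})^{q_-}s^{q_+-1}\lambda^{-q_+}$, which is $\leq 1$ precisely when $\lambda\geq(C_{q_-})^{q_-/q_+}s^{1/p_-}$, compatible with $\lambda\leq s$ exactly when $|s|\geq(C_{q_-})^{q_-}$. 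You should replace your $A/B$ decomposition in the second regime with this mixed pointwise bound.
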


\begin{proof}
 First, notice that due to the generalized Hölder's inequality we have that
    \begin{align*}
        |\Psi_f(s)|&\leq \int_{-\infty}^{\infty}|u_s(t)||f(t)|dt\\
        &\leq K_{p(\cdot)}\|u_s\|_{q(\cdot)}\|f\|_{p(\cdot)},
    \end{align*}
     Therefore, to obtain the estimate in the claim, it is enough to estimate $\|u_s\|_{q(\cdot)}$. For that, we estimate the modular $\rho_{q(\cdot)}(u_s/\lambda)$. Note that since $\left|\R_\infty^{q(\cdot)}\right|=\left|\R_1^{p(\cdot)}\right|=0$ we have that
    \begin{align*}
        \rho_{q(\cdot)}(u_s/\lambda)&\leq \int_{-\infty}^{\infty}\left|\frac{1-e^{-ist}}{it\lambda}\right|^{q(t)}dt%+\sup_{t\in\R}\left|\frac{1-e^{-ist}}{it\lambda}\right|\\
        \\
        &=\int_{-\infty}^{\infty}\left|\frac{\sin(st/2)}{t/2}\right|^{q(t)}\lambda^{-q(t)}dt.%+\frac{1}{\lambda}\sup_{t\in\R}\left|\frac{\sin(st/2)}{t/2}\right|.
    \end{align*}
    Since the inequality above is symmetric in $s$ and \eqref{ineq_Psi_bounded} is clearly true for $s=0$, we may assume that $s>0$. Performing the change of variables $st=x$, we obtain
    \begin{align*}
        \rho_{q(\cdot)}(u_s/\lambda)&\leq\int_{-\infty}^{\infty}\left|\frac{\sin(x/2)s}{x/2}\right|^{q(x/s)}\lambda^{-q(x/s)}dx\frac{1}{s}%+\frac{s}{\lambda}\sup_{t\in\R}\left|\frac{\sin(x/2)}{x/2}\right|
        \\
        &=\int_{-\infty}^{\infty}\left|\frac{\sin(x/2)}{x/2}\right|^{q(x/s)}\left(\frac{s}{\lambda}\right)^{q(x/s)}dx\frac{1}{s}.%+\frac{s}{\lambda}.
    \end{align*}
     Consider $0<\lambda\leq s$.  Since $|\sin(x)/x|\leq 1$ for all $x\in\R$, from the inequality above we have that
     \begin{align*}
        \rho_{q(\cdot)}(u_s/\lambda)&\leq 
        \int_{-\infty}^{\infty}\left|\frac{\sin(x/2)}{x/2}\right|^{q_{-}}\left(\frac{s}{\lambda}\right)^{q_{+}}dx\frac{1}{s}%+\frac{s}{\lambda}\\
       \\
&=4\int_0^{\infty}\left|\frac{\sin(y)}{y}\right|^{q_{-}}dy\ s^{(q_{+}-1)}\lambda^{-q_{+}}%+\frac{s}{\lambda}\\
       \\
       &= (C_{q_{-}})^{q_{-}}s^{(q_{+}-1)}\lambda^{-q_{+}},
    \end{align*}
     where in the second line we performed the change of variables $y=x/2$ and used the fact that $|\sin(x)/x|$ is even. From the inequality above, we conclude that if 
   \begin{equation}\label{lambda_p_->1}
\lambda\geq  (C_{q_{-}})^{\frac{q_{-}}{q_{+}}}s^{\frac{1}{p_{-}}}
   \end{equation}
   and 
   $\lambda\leq s$,
   we have that $\rho_{q(\cdot)}(u_s/\lambda)\leq 1$.
   %But the inequality above has real solutions only if $s\geq (C_{q_{-}})^{\frac{q_{-}}{q_{+}}}s^{\frac{1}{p_{-}}}$. 
   However, both inequalities can be satisfied for some $\lambda\in \R$ only if $s\geq  (C_{q_{-}})^{\frac{q_{-}}{q_{+}}}s^{\frac{1}{p_{-}}}$, or equivalently, $s\geq (C_{q_{-}})^{q_{-}}$. Hence, $ \|u_s\|_{q(\cdot)}\leq(C_{q_{-}})^{\frac{q_{-}}{q_{+}}}s^{\frac{1}{p_{-}}}$ if $s\geq (C_{q_{-}})^{q_{-}}$.
Suppose now $s\leq (C_{q_{-}})^{q_{-}}$ and consider $\lambda>s$. Similarly as above, we have that
     \begin{align*}
        \rho_{q(\cdot)}(u_s/\lambda)&\leq \int_{-\infty}^{\infty}\left|\frac{\sin(x/2)}{x/2}\right|^{q(x/s)}\left(\frac{s}{\lambda}\right)^{q(x/s)}dx\frac{1}{s}%+\frac{s}{\lambda}\\
        \\
       &\leq \int_{-\infty}^{\infty}\left|\frac{\sin(x/2)}{x/2}\right|^{q_{-}}\left(\frac{s}{\lambda}\right)^{q_{-}}dx\frac{1}{s}%+\frac{s}{\lambda}\\
       \\
       &=4\int_0^{\infty}\left|\frac{\sin(y)}{y}\right|^{q_{-}}dy\ s^{(q_{-}-1)}\lambda^{-q_{-}}%+\frac{s}{\lambda}\\
       \\
       &\leq (C_{q_{-}})^{q_{-}}s^{(q_{-}-1)}\lambda^{-q_{-}}.
    \end{align*}
   From the inequality and considerations  above, we conclude that if 
   \begin{equation*}
\lambda> \max\left\{s,C_{q_{-}}s^{\frac{1}{p_{+}}}\right\},
   \end{equation*}
   we have that $\rho_{q(\cdot)}(u_s/\lambda)\leq 1$.
 Therefore $\|u_s\|_{q(\cdot)}\leq  \max\left\{s,C_{q_{-}}s^{\frac{1}{p_{+}}}\right\}$. But since we are assuming $0<s\leq (C_{q_{-}})^{q_{-}}$, we have that $s\leq C_{q_{-}}s^{\frac{1}{p_{+}}}$, or equivalently, $s^{1-\frac{1}{p_{+}}}\leq C_{q_{-}}$. Indeed, note that $s\mapsto s^{1-\frac{1}{p_{+}}}=s^{\frac{1}{q_{-}}}$ is increasing for $s>0$ and assumes the value $C_{q_{-}}$ when $ s=(C_{q_{-}})^{q_{-}}$.
 %and $s^{\frac{1}{q_{-}}}=C_{q_{-}}\implies s=(C_{q_{-}})^{q_{-}}$. 
 Therefore 
 %s\leq \left(K(C)^{\frac{C}{C-1}}\right)^{\frac{C'-1}{C'}}s^{\frac{1}{C'}}$, therefore from the previous part of the proof 
 we conclude that $\|u_s\|_{q(\cdot)}\leq C_{q_{-}}s^{\frac{1}{p_{+}}}$ if $s\leq (C_{q_{-}})^{q_{-}}$. Finally, using both estimates we conclude that   $\|u_s\|_{q(\cdot)}\leq  \max\left\{(C_{q_{-}})^{\frac{q_{-}}{q_{+}}}s^{\frac{1}{p_{-}}},C_{q_{-}}s^{\frac{1}{p_{+}}}\right\}$, which finishes the proof.
\end{proof}

\begin{obs}
    We note that Proposition \ref{prop_bounded_-_+} extends the analogous result for the constant coefficient case, that is, $f\in L^p(\R)$, where $1<p<\infty$, as proven in Theorem 2.1 (a) in \cite{Talvila2025}, recovering the same constants when $\p$ does not depend on $x\in\R$.
\end{obs}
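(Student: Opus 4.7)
The plan is to verify the remark by specializing Proposition \ref{prop_bounded_-_+} to a constant exponent $p(\cdot)\equiv p$ with $1<p<\infty$ and checking that every constant in the resulting inequality matches the one in Theorem 2.1(a) of \cite{Talvila2025}. This is not really a new proof, just a bookkeeping verification, so I would present it as a direct substitution.

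First I would observe that when $p(\cdot)\equiv p$ is constant with $1<p<\infty$, the conjugate exponent $q(\cdot)\equiv q$ is also constant and finite, so $p_{-}=p_{+}=p$ and $q_{-}=q_{+}=q$. Next I would compute $K_{p(\cdot)}$ from its definition: since $\R_*^{p(\cdot)}=\R$ while $\R_1^{p(\cdot)}=\R_\infty^{p(\cdot)}=\varnothing$, the indicators satisfy $\delta(\R_*^{p(\cdot)})=1$ and $\delta(\R_1^{p(\cdot)})=\delta(\R_\infty^{p(\cdot)})=0$, and the bracketed factor is $\tfrac{1}{p_{-}}-\tfrac{1}{p_{+}}+1=1$, giving $K_{p(\cdot)}=1$. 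Of course $\|f\|_{p(\cdot)}=\|f\|_{p}$ in this setting.

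Then I would simplify the maximum on the right-hand side of \eqref{ineq_Psi_bounded}. The exponent $q_{-}/q_{+}$ collapses to $1$, so the first entry becomes $(C_{q_{-}})^{1}|s|^{1/p_{-}}=C_q|s|^{1/p}$, and the second entry becomes $C_{q_{-}}|s|^{1/p_{+}}=C_q|s|^{1/p}$ as well. Both arguments of the $\max$ therefore coincide, so the inequality reduces to
\begin{equation*}
    |\Psi_f(s)|\leq C_q|s|^{1/p}\|f\|_{p},
\end{equation*}
for every $s\in\R$, which is exactly the estimate stated in \cite[Theorem 2.1(a)]{Talvila2025}. The constant $C_q$ defined in Section \ref{sec-prop-psi} is literally $4^{1/q}\left(\int_0^\infty|\sin(y)/y|^q\,dy\right)^{1/q}$, matching the constant used by Talvila, so not just the shape but the exact numerical value of the bound is recovered.

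There is no real obstacle here; the only thing to be careful about is the formula for $K_{p(\cdot)}$ and the fact that the generalized Hölder inequality used in the proof of Proposition \ref{prop_bounded_-_+} produces no extra factor in the constant-exponent case. Since both potentially awkward terms (the indicator of $\R_1^{p(\cdot)}$ and the indicator of $\R_\infty^{p(\cdot)}$) vanish under the assumption $1<p<\infty$, this verification goes through without any loss.
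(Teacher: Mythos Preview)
Your verification is correct and is precisely the substitution the paper intends the reader to make; the remark carries no separate proof in the paper, and your computation of $K_{p(\cdot)}=1$ together with the collapse $q_{-}/q_{+}=1$ recovers Talvila's bound $|\Psi_f(s)|\le C_q|s|^{1/p}\|f\|_p$ exactly.
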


\begin{prop}\label{prop_bounded}
    Suppose that $p(\cdot)\in \mathcal{P}(\R)$ satisfies $p_{+}<\infty$. Then for any $f\in L^{p(\cdot)}(\R)$ we have that
    \begin{equation}\label{ineq_Psi_general}
        |\Psi_f(s)|\leq \left\{\begin{alignedat}{3}
            &\|f\|_{\p}|s|, &&p_{+}=1,\\
            &K_{\p}\|f\|_{\p}\max\left\{|s|,C_{q_{-}}|s|^{\frac{1}{p_{+}}}\right\}, &&p_{+}>p_{-}=1,\\
&K_{p(\cdot)}\|f\|_{p(\cdot)}\max\left\{(C_{q_{-}})^{\frac{q_{-}}{q_{+}}}|s|^{\frac{1}{p_{-}}},C_{q_{-}}|s|^{\frac{1}{p_{+}}}\right\}, \quad&&p_{-}>1.
       \end{alignedat} \right.
        %\|f\|_{\p}\max\left\{(\delta(\R_1^{\p})+\delta(\R\backslash \R_1^{\p})K_{\p}(1+(1-\delta(\R_1^{\p})))|s|^{\frac{1}{C'}},(K(C)^{C-1}+K_{\p}K(C))|s|^{\frac{1}{C}}\right\}?????
    \end{equation}
    for every $s\in\R$. Alternatively
\begin{equation}\label{ineq_Psi_general_2}
        |\Psi_f(s)|\leq 
         \left\{\begin{alignedat}{3}
            &\|f\|_{\p}|s|, &&p_{+}=1, \\
            &K_{p(\cdot)}\|f\|_{p(\cdot)}\max\left\{(C_{q_{-}})^{\frac{q_{-}}{q_{+}}}|s|^{\frac{1}{p_{-}}},C_{q_{-}}|s|^{\frac{1}{p_{+}}}\right\}, \quad&&\text{otherwise}.
        \end{alignedat}\right.
        %\|f\|_{\p}\max\left\{(\delta(\R_1^{\p})+\delta(\R\backslash \R_1^{\p})K_{\p}(1+(1-\delta(\R_1^{\p})))|s|^{\frac{1}{C'}},(K(C)^{C-1}+K_{\p}K(C))|s|^{\frac{1}{C}}\right\}?????
    \end{equation}
    with the convention that $q_{-}/\infty=0$. Furthermore, if we fix that $1^{\frac{\infty}{\infty}}=1$, we can consider only the second formula in \eqref{ineq_Psi_general_2}.
\end{prop}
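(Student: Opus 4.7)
The plan is to split into three cases by the position of $p_-$ and $p_+$. Case 3 ($p_->1$) is precisely Proposition~\ref{prop_bounded_-_+}, so I would simply invoke it.

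For Case 1 ($p_+=1$, hence $p(\cdot)\equiv 1$ and $q(\cdot)\equiv\infty$ a.e.), I would apply the generalized Hölder inequality in its $L^1$--$L^\infty$ form---here $K_\p=1$ since only $\R_1^\p$ has positive measure---together with the elementary pointwise bound
$$|u_s(t)|=\frac{2|\sin(st/2)|}{|t|}\leq|s|,$$
obtained from $|\sin x|\leq|x|$. This immediately gives $|\Psi_f(s)|\leq\|u_s\|_\infty\|f\|_1\leq|s|\,\|f\|_\p$.

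For Case 2 ($p_+>p_-=1$), I would mimic the proof of Proposition~\ref{prop_bounded_-_+} but accommodate the new contribution from $\R_\infty^\q=\R_1^\p$, which now has positive measure (while $\R_\infty^\p$ still has measure zero since $p_+<\infty$). Generalized Hölder reduces matters to estimating $\|u_s\|_\q$, and the modular splits as
$$\rho_\q(u_s/\lambda)=\int_{\R\setminus\R_1^\p}|u_s(t)/\lambda|^{q(t)}\,dt+\|u_s/\lambda\|_{L^\infty(\R_1^\p)}.$$
For the $L^\infty$ piece the elementary bound on $|u_s|$ gives $|s|/\lambda$. For the integral I would restrict to $\lambda\geq s$ so that $|u_s/\lambda|\leq 1$ pointwise; then $q(t)\geq q_-$ on the integration domain yields $|u_s/\lambda|^{q(t)}\leq|u_s/\lambda|^{q_-}$, and enlarging to $\R$ and evaluating as in the constant-exponent computation from Proposition~\ref{prop_bounded_-_+} bounds this by $(C_{q_-})^{q_-}s^{q_--1}\lambda^{-q_-}$. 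Only the global $q_-$ appears, so the fact that $q_+=\infty$ is not an obstruction. Choosing $\lambda$ comparable to $\max\{|s|,C_{q_-}|s|^{1/p_+}\}$ makes each of the two terms at most $1$, so $\rho_\q(u_s/\lambda)\leq 2$; the monotone scaling property $\rho_\q(g/c)\leq\rho_\q(g)/c$ for $c\geq 1$ upgrades this to $\leq 1$ at a comparable $\lambda$, with the resulting multiplicative constant absorbed into $K_\p=3-1/p_+\geq 2$.

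To pass from~\eqref{ineq_Psi_general} to the alternative form~\eqref{ineq_Psi_general_2}, I would check that the stated conventions collapse the first two cases into the third. In Case 1 with $p_-=p_+=1$ and $q_-=q_+=\infty$, the convention $1^{\infty/\infty}=1$ forces $(C_{q_-})^{q_-/q_+}=1$ and $|s|^{1/p_-}=|s|$; in Case 2 with $q_+=\infty$, the convention $q_-/\infty=0$ likewise gives $(C_{q_-})^{q_-/q_+}=1$ and $|s|^{1/p_-}=|s|$. In both cases the max reduces to the unified expression in~\eqref{ineq_Psi_general_2}. The main obstacle is Case 2: the $L^\infty$ contribution to the modular must be balanced against the integral contribution (introducing a constant absorbed into $K_\p$), and since $q_+=\infty$ the $\lambda\leq s$ branch from the proof of Proposition~\ref{prop_bounded_-_+} is no longer available, so the entire estimate has to be carried through in the $\lambda\geq s$ regime alone.
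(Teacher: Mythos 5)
Cases 1 and 3 and the reduction to \eqref{ineq_Psi_general_2} are fine and match the paper. The problem is in Case 2 ($p_{+}>p_{-}=1$): your route provably does not yield the constant $K_{\p}$ claimed in \eqref{ineq_Psi_general}, and the step where you say the extra factor is ``absorbed into $K_{\p}$'' is not valid, because $K_{\p}$ is the explicit constant from the generalized H\"older inequality that appears verbatim in the statement — it is not a free parameter. Concretely: at the natural scale $\lambda=\max\{s,C_{q_{-}}s^{1/p_{+}}\}$ both terms of your modular decomposition (the integral over $\R\setminus\R_1^{\p}$ and the $L^\infty$ term over $\R_1^{\p}=\R_\infty^{\q}$) are individually of size $1$, so $\rho_{\q}(u_s/\lambda)\le 2$ is the best you get, and the scaling property only upgrades this to $\|u_s\|_{\q}\le 2\lambda$. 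Your final bound is therefore $2K_{\p}\|f\|_{\p}\max\{|s|,C_{q_{-}}|s|^{1/p_{+}}\}$, which proves a weaker statement than the one asserted. This loss is intrinsic to estimating the full modular of $u_s$: the two nonnegative contributions cannot both be forced below $1/2$ without enlarging $\lambda$ by a constant factor.

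The paper avoids this by never putting the $\R_1^{\p}$ contribution through the norm of $u_s$ at all. It splits the defining integral of $\Psi_f$ as $\int_{\R_1^{\p}}+\int_{\R\setminus\R_1^{\p}}$, bounds the first piece directly by $\delta\left(\R_1^{\p}\right)|s|\,\lambda'\rho_{\p,\R_1^{\p}}(f/\lambda')\le \delta\left(\R_1^{\p}\right)|s|\,\|f\|_{\p}$ with $\lambda'=\|f\|_{\p}$, and applies H\"older only on the complement, where the relevant constant is $\tfrac{1}{p_{-}}-\tfrac{1}{p_{+}}+1=2-\tfrac{1}{p_{+}}$ and where your $\lambda\ge s$ modular estimate (the one you correctly identify as the only available branch) gives $\|u_s\|_{L^{\q}(\R\backslash\R_\infty^{\q})}\le\max\{|s|,C_{q_{-}}|s|^{1/p_{+}}\}$ with no extra factor. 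Since $|s|\le\max\{|s|,C_{q_{-}}|s|^{1/p_{+}}\}$, the two additive pieces recombine as $\bigl(\delta\left(\R_1^{\p}\right)+2-\tfrac{1}{p_{+}}\bigr)\max\{\cdot\}=K_{\p}\max\{\cdot\}$, recovering the stated constant exactly. You should restructure Case 2 along these lines; everything else in your argument can stay.
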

\begin{proof}
First, notice that if $p_{+}=1$, then $p\equiv 1$ and there is nothing to prove.
 Next, notice that the case  $p_{-}>1$ was already proved in Proposition  \ref{prop_bounded_-_+}. The only case remaining is for $p_{+}>p_{-}=1$. In this case, for $\lambda'>0$  due to the generalized Hölder's inequality we have that
 \begin{align}
     |{\Psi_f(s)}|&\leq{\lambda'}\delta\left(\R_1^{\p}\right)\int_{\R^{\p}_1}|u_s(t)|\frac{|f(t)|}{\lambda'}dt+\int_{\R\backslash\R^{\p}_1}|u_s(t)||f(t)|dt\notag\\
     &\leq \lambda'\delta\left(\R_1^{\p}\right)\int_{\R^{\p}_1}\frac{|f(t)|}{\lambda'}dt|s|+K_{L^{\p}({\R\backslash\R^{\p}_1)}}\|u_s\|_{L^{q(\cdot)}(\R\backslash\R_{\infty}^{q(\cdot)})}\|f\|_{L^{\p}({\R\backslash\R^{\p}_1)}}\notag\\
     &\leq \lambda'\delta\left(\R_1^{\p}\right)\rho_{\p,\R^{\p}_1}(f/\lambda')|s|+\left(\frac{1}{p_{-}}-\frac{1}{p_{+}}+1\right)\|u_s\|_{L^{q(\cdot)}(\R\backslash\R_{\infty}^{q(\cdot)})}\|f\|_{\p},\label{ineq_bounded}
 \end{align}
    where in the last line we used the fact that 
    \begin{equation*}
        K_{L^{\p}({\R\backslash\R^{\p}_1)}}=\left(\frac{1}{p_{-}}-\frac{1}{p_{+}}+1\right),
    \end{equation*}
    because $\left(\R\backslash\R^{\p}_1\right)^{\p}_1=\emptyset$, $|\R^{\p}_{\infty}|=0$ and $\p\not\equiv1\implies |\R^{\p}_*|\neq 0$.
    Since $f\in \Lp$, taking  $\lambda'=\|f\|_{\p}$ by \cite[Proposition 2.21]{Cruz-Uribe} we have that $\rho_{\p,\R^{\p}_1}(f/\lambda')\leq \rho_{\p}(f/\lambda')\leq1$. Next we estimate $\|u_s\|_{L^{q(\cdot)}(\R\backslash\R_{\infty}^{q(\cdot)})}$. 
    % For that, we estimate the modular $\rho_{q(\cdot),\R\backslash\R_{\infty}^{q(\cdot)}}(u_s/\lambda)$. 
    Note that since $q(x)<\infty$ on $\R\backslash\R_{\infty}^{q(\cdot)}$ we have that
    \begin{align*}
        \rho_{q(\cdot),\R\backslash\R_{\infty}^{q(\cdot)}}(u_s/\lambda)&\leq \int_{\R\backslash\R_{\infty}^{q(\cdot)}}\left|\frac{1-e^{-ist}}{it\lambda}\right|^{q(t)}dt%+\sup_{t\in\R}\left|\frac{1-e^{-ist}}{it\lambda}\right|\\
        \\
        &=\int_{\R\backslash\R_{\infty}^{q(\cdot)}}\left|\frac{\sin(st/2)}{t/2}\right|^{q(t)}\lambda^{-q(t)}dt%+\frac{1}{\lambda}\sup_{t\in\R}\left|\frac{\sin(st/2)}{t/2}\right|.
    \end{align*}
    As the inequality above is symmetric in $s$ and \eqref{ineq_Psi_bounded} is clearly true for $s=0$, we may assume that $s>0$. Performing the change of variables $st=x$, we obtain
    \begin{align*}
       \rho_{q(\cdot),\R\backslash\R_{\infty}^{q(\cdot)}}(u_s/\lambda)&\int_{\frac{x}{s}\in\R\backslash\R_{\infty}^{q(\cdot)}}\left|\frac{\sin(x/2)}{x/2}\right|^{q(x/s)}\left(\frac{s}{\lambda}\right)^{q(x/s)}dx\frac{1}{s}%+\frac{s}{\lambda}\sup_{t\in\R}\left|\frac{\sin(x/2)}{x/2}\right|
        % \\
        % &=\int_{-\infty}^{\infty}\left|\frac{\sin(x/2)}{x/2}\right|^{q(x/s)}\left(\frac{s}{\lambda}\right)^{q(x/s)}dx\frac{1}{s}.%+\frac{s}{\lambda}.
    \end{align*}
  Since $p_{+}<\infty$ implies $q_{-}>1$ and since $|\sin(x)/x|\leq 1$ for all $x\in\R$, by considering $\lambda\geq s$ the inequality above implies that
     \begin{align*}
\rho_{q(\cdot),\R\backslash\R_{\infty}^{q(\cdot)}}(u_s/\lambda)&
% \leq \int_{\frac{x}{s}\in\R\backslash\R_{\infty}^{q(\cdot)}}\left|\frac{\sin(x/2)s}{x/2}\right|^{q(x/s)}\lambda^{-q(x/s)}dx\frac{1}{s}%+\frac{s}{\lambda}\\
        % \\
        % &=\int_{\frac{x}{s}\in\R\backslash\R_{\infty}^{q(\cdot)}}\left|\frac{\sin(x/2)}{x/2}\right|^{q(x/s)}\left(\frac{s}{\lambda}\right)^{q(x/s)}dx\frac{1}{s}%+\frac{s}{\lambda}\\
        % \\
       \leq \int_{-\infty}^{\infty}\left|\frac{\sin(x/2)}{x/2}\right|^{q_{-}}\left(\frac{s}{\lambda}\right)^{q_{-}}dx\frac{1}{s}%+\frac{s}{\lambda}\\
       \\
       &=4\int_0^{\infty}\left|\frac{\sin(y)}{y}\right|^{q_{-}}dy\ s^{(q_{-}-1)}\lambda^{-q_{-}}%+\frac{s}{\lambda}\\
       \\
       &\leq (C_{q_{-}})^{q_{-}}s^{(q_{-}-1)}\lambda^{-q_{-}}.
    \end{align*}
     % In order to estimate $\|u_s\|_{L^{q(\cdot)}(\R\backslash\R_{\infty}^{q(\cdot)})}$ we want conditions on $\lambda$ such that $\rho_{q(\cdot),\R\backslash\R_{\infty}^{q(\cdot)}}(u_s/\lambda)\leq 1$. 
     Hence, we conclude that if 
   \begin{equation*}
\lambda\geq  \max\left\{C_{q_{-}}s^{\frac{1}{p_{+}}},s\right\}
   \end{equation*}
     we have that $\rho_{q(\cdot),\R\backslash\R_{\infty}^{q(\cdot)}}(u_s/\lambda)\leq 1$. Therefore $ \|u_s\|_{{L^{q(\cdot)}(\R\backslash\R_{\infty}^{q(\cdot)})}}\leq  \max\left\{s,C_{q_{-}}s^{\frac{1}{p_{+}}}\right\}$. Together with \eqref{ineq_bounded} and the previous considerations, this implies that
   \begin{align*}
       |\Psi_f(s)|&\leq \delta\left(\R_1^{\p}\right)\|f\|_{\p}|s|+\left(\frac{1}{p_{-}}-\frac{1}{p_{+}}+1\right)\|f\|_{p(\cdot)}\max\left\{|s|,C_{q_{-}}|s|^{\frac{1}{p_{+}}}\right\}\\
       &\leq \|f\|_{\p}\left(\delta\left(\R_1^{\p}\right)|s|+\left(\frac{1}{p_{-}}-\frac{1}{p_{+}}+1\right)\max\left\{|s|,C_{q_{-}}|s|^{\frac{1}{p_{+}}}\right\}\right).
   \end{align*}
   As in the proof of Proposition \ref{prop_bounded_-_+}, if $|s|\leq (C_{q_{-}})^{q_{-}}$ then $\max\left\{|s|,C_{q_{-}}|s|^{\frac{1}{p_{+}}}\right\}=C_{q_{-}}|s|^{\frac{1}{p_{+}}}$, otherwise
    $\max\left\{|s|,C_{q_{-}}|s|^{\frac{1}{p_{+}}}\right\}=|s|$. Hence, for $|s|\leq (C_{q_{-}})^{q_{-}}$
 we have that
 \begin{align*}
     |\Psi_f(s)| &\leq \|f\|_{\p}\left(\delta\left(\R_1^{\p}\right)|s|+\left(\frac{1}{p_{-}}-\frac{1}{p_{+}}+1\right)C_{q_{-}}|s|^{\frac{1}{p_{+}}}\right)\\
     &= \|f\|_{\p}\left(\delta\left(\R_1^{\p}\right)|s|^{\frac{1}{q_{-}}}+\left(\frac{1}{p_{-}}-\frac{1}{p_{+}}+1\right)C_{q_{-}}\right)|s|^{\frac{1}{p_{+}}}\\
     &\leq \|f\|_{\p}\left(\delta\left(\R_1^{\p}\right)C_{q_{-}}+\left(\frac{1}{p_{-}}-\frac{1}{p_{+}}+1\right)C_{q_{-}}\right)|s|^{\frac{1}{p_{+}}}\\
     &=\|f\|_{\p}K_{\p}C_{q_{-}}|s|^{\frac{1}{p_{+}}}.
 \end{align*}
 On the other hand, for $|s|>(C_{q_{-}})^{q_{-}}$ we have that
 \begin{align*}
      |\Psi_f(s)|&\leq \|f\|_{\p}\left(\delta\left(\R_1^{\p}\right)|s|+\left(\frac{1}{p_{-}}-\frac{1}{p_{+}}+1\right)|s|\right)= \|f\|_{\p}K_{\p}|s|.
 \end{align*}
 Therefore
 \begin{align*}
       |\Psi_f(s)|\leq\|f\|_{\p}K_{\p}\max\left\{|s|,C_{q_{-}}|s|^{\frac{1}{p_{+}}}\right\}.
   \end{align*}
   as claimed, finishing the proof.
\end{proof}

In what follows, we denote by $W:\R_+\to \R_+$ the main branch of the Lambert W function restricted to the positive real numbers, that is, $W(x)=y$ is the only real solution to the equation $ye^{y}=x$, for every $x>0$.

\begin{prop}\label{coro_rapid_decay}
     Suppose that $1/p(\cdot)\in \mathcal{P}(\R)$ is log-Hölder continuous at infinity with constants $p_{\infty}=1$ and $C>0$. Then for any $f\in L^{p(\cdot)}(\R)$ we have that
    \begin{equation}\label{ineq_Psi_log-holder}
        |\Psi_f(s)|\leq \|f\|_{\p}\left(\delta\left(\R^{\p}_1\right)+\left(2-\frac{1}{p_{+}}\right)e^{C(W(2)+1)}\right)|s|,
    \end{equation}
    for every $s\in\R$.
\end{prop}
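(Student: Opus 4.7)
The plan is to decompose
\[
|\Psi_f(s)|\le\int_{\R^{\p}_1}|u_s(t)||f(t)|\,dt+\int_{\R\setminus\R^{\p}_1}|u_s(t)||f(t)|\,dt,
\]
following the strategy already used in the proof of Proposition~\ref{prop_bounded} for the case $p_{+}>p_{-}=1$. The first integral is handled by the pointwise bound $|u_s(t)|\le|s|$ (which follows from $|1-e^{-ist}|\le|st|$) combined with the identification $L^{\p}(\R^{\p}_1)=L^1(\R^{\p}_1)$ isometrically, yielding
$\int_{\R^{\p}_1}|u_s||f|\,dt\le\delta(\R^{\p}_1)|s|\|f\|_{\p}$, which accounts for the first term in \eqref{ineq_Psi_log-holder}.

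For the second integral I apply the generalized Hölder inequality on $\R\setminus\R^{\p}_1$. On this set the essential infimum of $p$ is at least $1$, the $p=1$ contribution has been removed, so the relevant Hölder constant is bounded above by $2-1/p_{+}$. This reduces the problem to proving $\|u_s\|_{L^{\q}(\R\setminus\R^{\p}_1)}\le e^{C(W(2)+1)}|s|$.

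To establish this norm bound, set $\lambda=M|s|$ with $M>1$ to be chosen and estimate
\[
\rho_{\q,\R\setminus\R^{\p}_1}(u_s/\lambda)=\int_{\R\setminus\R^{\p}_1}\left(\frac{|u_s(t)|}{\lambda}\right)^{q(t)}dt\le\int_{\R}(1/M)^{q(t)}\,dt,
\]
using once more the uniform bound $|u_s(t)|\le|s|$. The log-Hölder hypothesis $|1/p(t)-1|\le C/\log(e+|t|)$ implies $q(t)\ge\log(e+|t|)/C$ almost everywhere, so for $M>1$ one has $(1/M)^{q(t)}\le(e+|t|)^{-\log M/C}$. A direct integration yields $2e^{1-\log M/C}/(\log M/C-1)$, valid whenever $\log M/C>1$.

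Requiring this last quantity to be at most $1$ and substituting $\beta=\log M/C-1$ turns the condition into $\beta e^{\beta}\ge 2$, whose smallest solution is $\beta=W(2)$, selecting $M=e^{C(W(2)+1)}$. Combining the two pieces delivers \eqref{ineq_Psi_log-holder}. The main subtlety is recognizing that the crude bound $|u_s(t)|\le|s|$ is precisely the right pointwise ingredient here — the finer estimate $|u_s(t)|\le 2/|t|$ is not needed — because the logarithmic growth of $q(t)$ at infinity supplies the necessary integrability all by itself, after which the sharp constant is pinned down via the Lambert $W$ function.
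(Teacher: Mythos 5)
Your argument is correct and follows essentially the same route as the paper's proof: the same splitting of the integral over $\R_1^{\p}$ and its complement, the same pointwise bound $|u_s(t)|\le|s|$, the same lower bound $q(t)\ge C^{-1}\log(e+|t|)$ feeding into the modular estimate, and the same Lambert-$W$ computation selecting $\lambda=e^{C(W(2)+1)}|s|$. The only soft spot is asserting that the Hölder constant on $\R\setminus\R_1^{\p}$ is at most $2-\tfrac{1}{p_+}$, which silently drops a possible $\delta(\R_\infty^{\p})$ contribution — but the paper's own proof does exactly the same, so this is a shared bookkeeping issue rather than a divergence.
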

\begin{proof}
Notice that for $\lambda'>0$  due to the generalized Hölder's inequality we have that
 \begin{align*}
     |{\Psi_f(s)}|&\leq\delta\left(\R^{\p}_1\right){\lambda'}\int_{\R^{\p}_1}|u_s(t)|\frac{|f(t)|}{\lambda'}dt+\int_{\R\backslash\R^{\p}_1}|u_s(t)||f(t)|dt\\
     &\leq \delta\left(\R^{\p}_1\right)\lambda'\int_{\R^{\p}_1}\frac{|f(t)|}{\lambda'}dt|s|+K_{L^{\p}({\R\backslash\R^{\p}_1)}}\|u_s\|_{L^{q(\cdot)}(\R\backslash\R_{\infty}^{q(\cdot)})}\|f\|_{L^{\p}({\R\backslash\R^{\p}_1)}}\\
     &\leq \delta\left(\R^{\p}_1\right)\lambda'\rho_{\p,\R^{\p}_1}(f/\lambda')|s|+\left(1+\frac{1}{p_{-}}-\frac{1}{p_{+}}+\delta\left(\R_{\infty}^{\p}\right)\right)\|u_s\|_{L^{q(\cdot)}(\R\backslash\R_{\infty}^{q(\cdot)})}\|f\|_{\p},
 \end{align*}
    where in the last line we used the fact that 
    \begin{equation*}
        K_{L^{\p}({\R\backslash\R^{\p}_1)}}\leq1+\frac{1}{p_{-}}-\frac{1}{p_{+}}+\delta\left(\R_{\infty}^{\p}\right),
    \end{equation*}
   Once again, since $f\in \Lp$, taking  $\lambda'=\|f\|_{\p}$ by \cite[Proposition 2.21]{Cruz-Uribe} we have that $\rho_{\p,\R^{\p}_1}(f/\lambda')\leq \rho_{\p}(f/\lambda')\leq1$.  Next we estimate $\|u_s\|_{L^{q(\cdot)}(\R\backslash\R_{\infty}^{q(\cdot)})}$.
    Note that since $q(x)<\infty$ on $\R\backslash\R_{\infty}^{q(\cdot)}$ we have that
    \begin{align*}
        \rho_{q(\cdot),\R\backslash\R_{\infty}^{q(\cdot)}}(u_s/\lambda)&=\int_{\R\backslash\R_{\infty}^{q(\cdot)}}\left|\frac{1-e^{-ist}}{it\lambda}\right|^{q(t)}dt\\
        &=\int_{\R\backslash\R_{\infty}^{q(\cdot)}}\left|\frac{\sin(st/2)}{t/2}\right|^{q(t)}\lambda^{-q(t)}dt.
    \end{align*}
    Since the inequality above is symmetric in $s$ and \eqref{ineq_Psi_log-holder} is trivially true for $s=0$, we may assume that $s>0$. As $\left|\frac{\sin(st/2)}{t/2}\right|\leq s$ for all $t\in\R$, we have that
    \begin{align}
        \rho_{q(\cdot),\R\backslash\R_{\infty}^{q(\cdot)}}(u_s/\lambda)&=\int_{\R\backslash\R_{\infty}^{q(\cdot)}}\left|\frac{\sin(st/2)}{t/2}\right|^{q(t)}\lambda^{-q(t)}dt\notag\\
        &\leq \int_{\R\backslash\R_{\infty}^{q(\cdot)}}\left(\frac{s}{\lambda}\right)^{q(t)}dt.\label{ineq_general_tau_q}
    \end{align}
Notice that since $1/\p$ is log-Hölder continuous at infinity with constants $p_\infty=1$ and $C>0$ we have that
\begin{align*}
    \frac{1}{q(x)}=1-\frac{1}{p(x)}\leq \frac{C}{\log(e+|x|)},
\end{align*}
for almost every $x\in\R$, hence
\begin{equation*}
    q(x)\geq C^{-1}\log(e+|x|),
\end{equation*}
for almost every $x\in\R$.
Considering $\lambda> s$ by the inequality above we have that
    \begin{align*}
        \rho_{q(\cdot),\R\backslash\R_{\infty}^{q(\cdot)}}(u_s/\lambda)&\leq \int_{\R\backslash\R_{\infty}^{q(\cdot)}}\left(\frac{s}{\lambda}\right)^{C^{-1}\log(e+|t|)}dt\\
        &\leq\int_{-\infty}^{\infty}(e+|t|)^{-C^{-1}\log(\lambda/s)}dt\\
        &=2\int_e^{\infty}t^{-C^{-1}\log(\lambda/s)}dt.
    \end{align*}
    If we also consider $\lambda>se^{C}>s$, so that $C^{-1}\log(\lambda/s)>1$, we have that
   \begin{align*}
         \rho_{q(\cdot),\R\backslash\R_{\infty}^{q(\cdot)}}(u_s/\lambda)&\leq2\left(\frac{t^{-C^{-1}\log(\lambda/s)+1}}{-C^{-1}\log(\lambda/s)+1}\right)_{t=e}^{t=\infty}\\
         &=2\left(\frac{e^{-(C^{-1}\log(\lambda/s)-1)}}{C^{-1}\log(\lambda/s)-1}\right).
   \end{align*}
   Notice that for $x=(C^{-1}\log(\lambda/s)-1)>0$, we have that
   \begin{equation*}
       2\left(\frac{e^{-(C^{-1}\log(\lambda/s)-1)}}{C^{-1}\log(\lambda/s)-1}\right)=1\iff\frac{1}{xe^{x}}=\frac{1}{2}\iff xe^{x}=2\iff x=W(2).
   \end{equation*}
   Hence, we conclude that if 
   \begin{equation*}
C^{-1}\log(\lambda/s)-1\geq W(2)\iff \lambda\geq se^{C(W(2)+1)}
   \end{equation*}
   and $\lambda> se^{C}$, we have that $  \rho_{q(\cdot),\R\backslash\R_{\infty}^{q(\cdot)}}(u_s/\lambda)\leq 1$. But since $W(2)>0$, it is enough that $\lambda \geq se^{C(W(2)+1)}$.
   Therefore  $\|u_s\|_{{L^{q(\cdot)}(\R\backslash\R_{\infty}^{q(\cdot)})}}\leq se^{C(W(2)+1)}=|s|e^{C(W(2)+1)}$. Finally, since $p_{-}=1$, we have that $  K_{L^{\p}({\R\backslash\R^{\p}_1)}}\leq2-\frac{1}{p_{+}}+\delta\left(\R_{\infty}^{\p}\right)$, and so
   \begin{align*}
       |\Psi_f(s)|&\leq \delta\left(\R^{\p}_1\right)\|f\|_{\p}|s|+\left(2-\frac{1}{p_{+}}\right)\|f\|_{p(\cdot)}e^{C(W(2)+1)}|s|\\
       &= \|f\|_{\p}\left(\delta\left(\R^{\p}_1\right)+\left(2-\frac{1}{p_{+}}\right)e^{C(W(2)+1)}\right)|s|,
   \end{align*}
   as claimed.
\end{proof}
In view of Proposition \ref{coro_rapid_decay}, we establish the following notation.
\begin{definition}
    Let $\p\in\mathcal{P}(\R)$. We will say that $\p$ is $\LH$ if $\p$ is log-Hölder continuous at infinity with $p_\infty=1$.
\end{definition}

Next we provide an equivalent characterization of the log-Hölder decay condition with $p_\infty=1$ which can be easier to verify and useful to present examples.

\begin{prop}\label{lemma_tau}
    Let $\p\in\mathcal{P}(\R)$. Then $1/\p$ is $\LH$ if and only if there exist constants $M,\kappa>0$ such that
    \begin{equation}\label{ineq_log_tau}
        \frac{1}{p(x)-1}\geq \kappa\log(|x|),
    \end{equation}
    for almost every $x\in\R$, such that $|x|>M$.
\end{prop}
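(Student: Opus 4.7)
The plan is to unwrap the definition of ``$1/\p$ is $\LH$'' into an equivalent asymptotic bound on $q(x)=1+1/(p(x)-1)$, and then transfer it to \eqref{ineq_log_tau} using only elementary comparisons between $\log|x|$ and $\log(e+|x|)$. Since $p(x)\geq 1$ a.e.\ forces $1/p(x)\leq 1$, the condition that $1/\p$ be log-Hölder continuous at infinity with limit $1$ reads
\[
1-\frac{1}{p(x)}=\frac{p(x)-1}{p(x)}\leq \frac{C}{\log(e+|x|)}\quad\text{a.e.},
\]
which, after taking reciprocals and subtracting $1$, is equivalent to
\[
\frac{1}{p(x)-1}=q(x)-1\geq C^{-1}\log(e+|x|)-1\quad\text{a.e.}
\]
This will serve as the master identity for both directions.

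For the forward implication ($\Rightarrow$), I choose $M>0$ large enough that $\log(e+|x|)\geq 2C$ for $|x|>M$; then $C^{-1}\log(e+|x|)-1\geq \tfrac{1}{2C}\log(e+|x|)\geq \tfrac{1}{2C}\log|x|$ by monotonicity of $\log$, which yields \eqref{ineq_log_tau} with $\kappa=1/(2C)$.

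For the converse ($\Leftarrow$), I rearrange \eqref{ineq_log_tau} into $p(x)-1\leq 1/(\kappa\log|x|)$ for a.e.\ $|x|>M$, and using $p(x)\geq 1$ I bound
\[
\left|\frac{1}{p(x)}-1\right|=\frac{p(x)-1}{p(x)}\leq p(x)-1\leq \frac{1}{\kappa\log|x|}.
\]
For $|x|>M_0:=\max\{M,e^2\}$, the estimate $\log(e+|x|)\leq \log|x|+1\leq 2\log|x|$ gives $1/(\kappa\log|x|)\leq 2/(\kappa\log(e+|x|))$. For $|x|\leq M_0$, the trivial bound $|1/p(x)-1|\leq 1\leq \log(e+M_0)/\log(e+|x|)$ suffices. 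Setting $C:=\max\{2/\kappa,\,\log(e+M_0)\}$ then delivers the log-Hölder-at-infinity condition globally.

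The only point requiring care is keeping the direction of the inequalities straight when passing between $\log|x|$ and $\log(e+|x|)$ (since $1/\log(\cdot)$ is decreasing), and extending the converse bound from $|x|>M$ to all of $\R$; both issues are handled by the choice of $M_0$ and the final constant $C$ described above. No substantial estimate beyond these elementary comparisons is needed.
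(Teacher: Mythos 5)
Your proof is correct and follows essentially the same route as the paper's: both directions reduce to the observation that the $\LH$ condition is equivalent to $q(x)=1+\frac{1}{p(x)-1}\gtrsim \log(e+|x|)$, and then trade $\log(e+|x|)$ for $\log|x|$ (and back) via elementary comparisons for large $|x|$, handling the bounded region with the trivial bound $|1-1/p(x)|\leq 1$. The only differences are cosmetic choices of constants ($\kappa=1/(2C)$ versus the paper's $\kappa=C^{-1}-\varepsilon_0$).
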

\begin{proof}
    Fix $\p\in\mathcal{P}(\R)$ and let $\tau:\R\to [0,\infty]$ be given by
    \begin{equation*}
        \tau(x)=\begin{cases}
            \frac{1}{p(x)-1}&\text{if }1<p(x)<\infty,\\
            \infty &\text{if } p(x)=1,\\
            0  &\text{if }p(x)=\infty,
        \end{cases}
    \end{equation*}
    for almost every $x\in\R$, so that $p(x)=1+\tau(x)^{-1}$. First assume that $\p$ satisfies inequality \eqref{ineq_log_tau}. This implies that $\tau(x)\geq\kappa\log(|x|)$  for almost every $x\in\R$, such that $|x|>M$. 
    
    Therefore, if $\p$ satisfies inequality \eqref{ineq_log_tau} then for $p_{\infty}=1$ and $M'>M$ sufficiently big, we have that
\begin{align*}
    \left|\frac{1}{p_{\infty}}-\frac{1}{p(x)}\right|&=\frac{1}{q(x)}\\
    &\leq \frac{1}{1+\tau(x)}\\
    &\leq \frac{1}{1+\kappa\log(|x|)}\\
    &\leq \frac{1}{\kappa\log(e+|x|)}=\frac{\kappa^{-1}}{\log(e+|x|)},
\end{align*}
for almost every $|x|>M'>M$, where in the last inequality we used the fact that we may choose $M'>M$ big enough so that $\kappa\log(|x|)+1\geq \kappa\log(e+|x|)$ fo every $|x|>M'$. On the other hand, if $|x|<M'$, since $ \left|1-\frac{1}{p(x)}\right|\leq 1$ and $x\mapsto\log(e+x)$ is increasing for $x\geq0$, we have that
\begin{equation*}
     \left|{1}-\frac{1}{p(x)}\right|\leq \frac{K}{\log(e+|x|)},
\end{equation*}
where $K=\log(e+M')$. Therefore, taking $C=\max\{K,\kappa^{-1}\}$, we have that $ \left|\frac{1}{1}-\frac{1}{p(x)}\right|\leq\frac{C}{\log(e+|x|)}$ for almost every $x\in\R$, proving that $1/\p$ is log-Hölder continuous at infinity with $p_\infty=1$, as claimed. Now suppose that $1/\p$ is  $\LH$. Then 
\begin{align*}
    \frac{1}{p(x)}&\geq 1-\frac{C}{\log(e+|x|)}\\
    &=\frac{\log(e+|x|)-C}{\log(e+|x|)},
\end{align*}
hence 
\begin{align*}
    p(x)&\leq 1+\frac{C}{\log(e+|x|)-C}\\
    &=1+(C^{-1}\log(e+|x|)-1)^{-1}\\
    &\leq 1+(C^{-1}-\varepsilon_0)^{-1}\log(|x|)^{-1},
\end{align*}
for any $\varepsilon_0\in (0,C^{-1})$ and every $|x|>M_{\varepsilon_0}>0$ sufficiently big. Therefore we have that $\p\in\mathcal{P}(\R)$ satisfies inequality \eqref{ineq_log_tau} with $\kappa=(C^{-1}-\varepsilon_0)$ and $M=M_{\varepsilon_0}$.
\end{proof}

Thus in particular, any $\p\in\mathcal{P}(\Omega)$, $p(x)=1+\tau(x)^{-1}$ is $\LH$ if there exist $\kappa,M>0$ such that
\begin{equation}\label{ineq_LH}
    \tau(x)\geq \kappa\log(|x|),
\end{equation}
for almost every $|x|\geq M$.

\begin{obs}
    We note that if $p_{+}<\infty$, then $\frac{1}{\p}$ is log-Hölder continuous at infinity if and only if $\p$ is log-Hölder continuous at infinity, so in this case we may replace the log-Hölder decay condition in Proposition \ref{coro_rapid_decay} by log-Hölder continuity at infinity. Moreover, if one relaxes the conditions for $\p$ to be log-Hölder continuous at infinity by requiring inequality \eqref{ineq_log-holder} to hold only for sufficiently large $|x|$, then one can also prove that in this case $\frac{1}{\p}$ is log-Hölder continuous at infinity and therefore this condition is also sufficient for Proposition \ref{coro_rapid_decay}.
\end{obs}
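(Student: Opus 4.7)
The remark contains two distinct assertions, and the common technical ingredient for both is the elementary identity
\[
\left|\frac{1}{p(x)}-\frac{1}{p_\infty}\right|=\frac{|p(x)-p_\infty|}{p(x)\,p_\infty},
\]
valid for any finite $p_\infty$ and any $x$ with $p(x)<\infty$. This identity links the log-Hölder behaviour of $\p$ and of $1/\p$ at infinity and does most of the work.

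For the first assertion, I would argue the two implications separately. If $|p(x)-p_\infty|\le C/\log(e+|x|)$ a.e., then using $p(x),p_\infty\ge 1$ the identity above immediately yields $|1/p(x)-1/p_\infty|\le C/\log(e+|x|)$; note that this direction does not even require $p_+<\infty$. For the converse, suppose $|1/p(x)-1/p_\infty|\le C/\log(e+|x|)$. Multiplying the identity above by $p(x)p_\infty$ and estimating $p(x)\le p_+$ gives $|p(x)-p_\infty|\le p_+ p_\infty\cdot C/\log(e+|x|)$, which is where the hypothesis $p_+<\infty$ enters essentially. This proves the equivalence.

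For the second assertion, suppose \eqref{ineq_log-holder} holds only on $\{|x|>M\}$. On that region the forward direction from the first paragraph (which does not need $p_+<\infty$) already gives $|1/p(x)-1/p_\infty|\le C/\log(e+|x|)$ a.e. On the complementary region $\{|x|\le M\}$ I would use the trivial bound $|1/p(x)-1/p_\infty|\le 1/p(x)+1/p_\infty\le 2$ together with the monotonicity of $r\mapsto\log(e+r)$, which gives $\log(e+|x|)\le\log(e+M)$ and hence $|1/p(x)-1/p_\infty|\le 2\log(e+M)/\log(e+|x|)$ on this region. Taking $C'=\max\{C,2\log(e+M)\}$ shows that $1/\p$ is log-Hölder continuous at infinity in the original global sense, with the same limiting value $1/p_\infty$. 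Specialising to $p_\infty=1$ then yields $1/\p\in\LH$, which is exactly the hypothesis of Proposition~\ref{coro_rapid_decay}, so the relaxed condition is indeed sufficient there.

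The main obstacle is essentially cosmetic: handling the bounded region $|x|\le M$ without breaking the decay estimate. This is resolved by the observation that $1/\log(e+|x|)$ is bounded below on any compact set, so the finite excess can simply be absorbed into a larger constant. All other steps are algebraic manipulations of the key identity, and the asymmetric role of $p_+<\infty$ (needed only for the $1/\p\Rightarrow\p$ direction) falls out transparently from where $p(x)$ appears in the denominator versus the numerator.
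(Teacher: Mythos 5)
Your argument is correct and is essentially the intended one: the paper states this remark without a written proof, but your key identity $|1/p(x)-1/p_\infty|=|p(x)-p_\infty|/(p(x)p_\infty)$ together with absorbing the compact region $\{|x|\le M\}$ into a larger constant (using that $\log(e+|x|)$ is bounded there) is exactly the technique the authors use in the proof of Proposition~\ref{lemma_tau}. You also correctly isolate where $p_+<\infty$ is genuinely needed (only in passing from $1/\p$ back to $\p$), which is the point of the remark.
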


Next we present a particular case where $1/\p$ is $\LH$ and one can obtain a sharper estimate for $\Psi_f$.

\begin{example}\label{exemp_|x|}
    Suppose that $p(\cdot)\in \mathcal{P}(\R)$ satisfies
    \begin{equation}\label{ineq_p_|x|^k}
       p(x)\leq 1+C_0^{-1}|x|^{-k}=\frac{1+C_0|x|^k}{C_0|x|^k},
    \end{equation}
    for almost every $x\in\R$, and for some $C_0, k> 0$. Then for any $f\in L^{p(\cdot)}(\R)$ we have that
    \begin{equation}\label{ineq_Psi_|x|^k}
        |\Psi_f(s)|\leq K\|f\|_{p(\cdot)}|s|,
    \end{equation}
    for every $s\in\R$, where \begin{align*}
        K&=\delta\left(\R^{\p}_1\right)+\left(2-\frac{1}{p_{+}}\right)\exp\left({k\cdot W\left(\left(\frac{2\Gamma(\frac{1}{k})}{k^{k+1}C_0^{\frac{1}{k}}}\right)^{\frac{1}{k}}\right)}\right)\\
        &\leq K_{\p}\exp\left({k\cdot W\left(\left(\frac{2\Gamma(\frac{1}{k})}{k^{k+1}C_0^{\frac{1}{k}}}\right)^{\frac{1}{k}}\right)}\right).
    \end{align*}
    The proof follows the same arguments from the proof of Proposition \ref{coro_rapid_decay} up to inequality \eqref{ineq_general_tau_q}, except that now inequality \eqref{ineq_p_|x|^k} implies that 
\begin{equation}\label{ineq_q_|x|^k}
      1+C_0|x|^k\leq q(x),
    \end{equation}
   for almost every $x\in\R$. Hence, considering $\lambda>s$, we have that
    \begin{align*}
        \rho_{q(\cdot),\R\backslash\R_{\infty}^{q(\cdot)}}(u_s/\lambda)&\leq \int_{-\infty}^{\infty}\left(\frac{s}{\lambda}\right)^{1+C_0|t|^k}dt\\
        &=\int_{-\infty}^{\infty}e^{-(1+C_0|t|^k)\log(\lambda/s)}dt\\
         &=2\int_{0}^{\infty}e^{-(1+C_0t^k)\log(\lambda/s)}dt\\
         &{=2e^{-\log(\lambda/s)}\int_0^{\infty}e^{-C_0t^k\log(\lambda/s)}dt}\\
         &=\frac{2}{k}e^{-\log(\lambda/s)}C_0^{\frac{1}{k}}\log(\lambda/s)^{\frac{1}{k}}\int_0^\infty e^{-y}y^{\frac{1}{k}-1}dy\\
         &=\frac{s}{\lambda}\frac{2\Gamma(\frac{1}{k})}{k(C_0\log(\lambda/s))^{1/k}}\\
         &=\frac{2\Gamma(\frac{1}{k})}{kC_0^{\frac{1}{k}}}\frac{(\lambda/s)^{-1}}{\log(\lambda/s)^{1/k}},
    \end{align*}
 % In order to estimate $\|u_s\|_{L^{q(\cdot)}(\R\backslash\R_{\infty}^{q(\cdot)})}$, we want conditions on $\lambda$ such that $\rho_{q(\cdot),\R\backslash\R_{\infty}^{q(\cdot)}}(u_s/\lambda)\leq 1$. 
 where in the third-to-last line we performed the change of variables $C_0t^k\log(\lambda/s)\mapsto y$.  Therefore, from the fact that $W(x)>0$ for $x>0$ and from the inequality above we conclude that for 
 \begin{equation*}
     \lambda\geq \exp\left({k\cdot W\left(\left(\frac{2\Gamma(\frac{1}{k})}{k^{k+1}C_0^{\frac{1}{k}}}\right)^{\frac{1}{k}}\right)}\right)s,
 \end{equation*}
   we have that $\rho_{q(\cdot),\R\backslash\R_{\infty}^{q(\cdot)}}(u_s/\lambda)\leq 1$. Hence, setting $K'\defeq \exp\left({k\cdot W\left(\left(\frac{2\Gamma(\frac{1}{k})}{k^{k+1}C_0^{\frac{1}{k}}}\right)^{\frac{1}{k}}\right)}\right)$, we have that $\|u_s\|_{L^{q(\cdot)}(\R\backslash\R_{\infty}^{q(\cdot)})}\leq  K'|s|$. Finally, since $p_{-}=1$ we have that
   \begin{align*}
       |\Psi_f(s)|&\leq \delta\left(\R^{\p}_1\right)\|f\|_{\p}|s|+K_{p(\cdot)}\|f\|_{p(\cdot)}K'|s|\\
       &= \|f\|_{\p}\left(\delta\left(\R^{\p}_1\right)+\left(2-\frac{1}{p_{+}}\right)\exp\left({k\cdot W\left(\left(\frac{2\Gamma(\frac{1}{k})}{k^{k+1}C_0^{\frac{1}{k}}}\right)^{\frac{1}{k}}\right)}\right)\right)|s|,
   \end{align*}
   as claimed.
\end{example}

\begin{obs}\label{remark_asympt}
    Note that if $\p\in\mathcal{P}(\R)$ satisfies $p_{+}<\infty$ or  $1/\p$ is $\LH$, then for any $f\in \Lp$ we have that
    \begin{equation*}
        \Psi_f(s) = O\left(|s|^{\frac{1}{p_{-}}}\right)\text{ as } |s|\to \infty.
    \end{equation*}
    Moreover,
    \begin{equation*}
        \Psi_f(s)=\left\{\begin{alignedat}{3}
      &O(|s|) &&\text{ as } |s|\to 0^+, \text{ if $1/\p$ is $\LH$},\\
      &O\left(|s|^{\frac{1}{p_{+}}}\right) &&\text{ as } |s|\to 0^+, \text{ if }p_{+}<\infty.
        \end{alignedat}\right.
    \end{equation*}
\end{obs}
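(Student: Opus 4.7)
The plan is to extract these asymptotic statements directly from the quantitative bounds already proved in Propositions \ref{prop_bounded} and \ref{coro_rapid_decay}. In each regime one identifies the dominant term in the maximum appearing in those bounds and compares its exponent with the one claimed in the remark; the proof is therefore essentially exponent bookkeeping and contains no new analytic content.

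For the limit $|s|\to\infty$, under the assumption $p_+<\infty$ the unified formula \eqref{ineq_Psi_general_2} in Proposition \ref{prop_bounded} bounds $|\Psi_f(s)|$ by a constant multiple of $\max\{|s|^{1/p_-},|s|^{1/p_+}\}$, with the stated conventions absorbing the degenerate case $p_+=1$. Since $1/p_-\geq 1/p_+$, for $|s|\geq 1$ the $|s|^{1/p_-}$ term dominates, yielding $\Psi_f(s)=O(|s|^{1/p_-})$. Under the alternative hypothesis that $1/\p$ is $\LH$, the condition $p_\infty=1$ combined with $p(x)\geq 1$ a.e.\ forces $p_-=1$; the linear bound from Proposition \ref{coro_rapid_decay} then reads $\Psi_f(s)=O(|s|)=O(|s|^{1/p_-})$, as claimed.

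For the limit $|s|\to 0^+$ the roles of the two exponents reverse: on $0<|s|\leq 1$ one has $|s|^{1/p_+}\geq |s|^{1/p_-}$, so the $|s|^{1/p_+}$ term becomes the dominant one in the maximum of \eqref{ineq_Psi_general_2}, producing $\Psi_f(s)=O(|s|^{1/p_+})$ under the hypothesis $p_+<\infty$. Under the $\LH$ hypothesis the linear bound from Proposition \ref{coro_rapid_decay} gives $\Psi_f(s)=O(|s|)$ directly. The one step that requires any thought beyond identifying the larger of two powers of $|s|$ is the observation that the $\LH$ hypothesis forces $p_-=1$, which is precisely what matches the linear large-$|s|$ bound to the claimed $|s|^{1/p_-}$; I do not expect any genuine obstacle, since all the nontrivial estimates have already been established in the preceding propositions.
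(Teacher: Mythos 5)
Your proposal is correct and matches the paper's (implicit) justification: the remark is stated without a separate proof precisely because it is read off from the bounds in Propositions \ref{prop_bounded} and \ref{coro_rapid_decay} by comparing the two powers of $|s|$ in the maximum, exactly as you do, including the observation that the $\LH$ hypothesis forces $p_{-}=1$ so that the linear bound is the $|s|^{1/p_{-}}$ bound.
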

\begin{corollary}
    Let $\p\in \mathcal{P}(\R)$. Then for any $f\in L^{p(\cdot)}(\R)$, we have that $\Psi_f$ is a Lipschitz continuous function if $\frac{1}{\p
    }$ is $\LH$, or if $p_+<\infty$ then $\Psi_f$ is a Hölder continuous function with exponent $\frac{1}{p_{+}}$.
\end{corollary}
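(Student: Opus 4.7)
The plan is to reduce every finite-difference estimate for $\Psi_f$ to a single evaluation of the same kind of integral, then quote the two estimates already established. For $s_1,s_2\in\R$, I would first write
\[
\Psi_f(s_1)-\Psi_f(s_2)=\int_{-\infty}^{\infty}\frac{e^{-is_2 t}-e^{-is_1 t}}{it}f(t)\,dt=\int_{-\infty}^{\infty}\frac{1-e^{-i(s_1-s_2)t}}{it}\,g(t)\,dt=\Psi_g(s_1-s_2),
\]
where $g(t)\defeq e^{-is_2 t}f(t)$. Since $|g(t)|=|f(t)|$ almost everywhere, the modular is unchanged, $\rho_{\p}(g/\lambda)=\rho_{\p}(f/\lambda)$ for every $\lambda>0$, so $\|g\|_{\p}=\|f\|_{\p}$. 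This pointwise modulation trick is the key observation: a difference $\Psi_f(s_1)-\Psi_f(s_2)$ is identified with a single value of $\Psi$ for a function of the same variable exponent norm, so that the bounds of Propositions~\ref{prop_bounded} and \ref{coro_rapid_decay} can be applied directly with $s=s_1-s_2$.

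With this identity, each assertion is immediate. If $1/\p$ is $\LH$, Proposition~\ref{coro_rapid_decay} applied to $g$ gives
\[
|\Psi_f(s_1)-\Psi_f(s_2)|=|\Psi_g(s_1-s_2)|\leq \left(\delta(\R^{\p}_1)+\left(2-\tfrac{1}{p_+}\right)e^{C(W(2)+1)}\right)\|f\|_{\p}\,|s_1-s_2|,
\]
which is the Lipschitz estimate. If instead $p_+<\infty$, Proposition~\ref{prop_bounded} gives
\[
|\Psi_g(s_1-s_2)|\leq K_{\p}\|f\|_{\p}\max\bigl\{A\,|s_1-s_2|^{1/p_-},\,C_{q_-}|s_1-s_2|^{1/p_+}\bigr\},
\]
where $A$ is either $1$ or $(C_{q_-})^{q_-/q_+}$ depending on whether $p_-=1$ or $p_->1$. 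Since $1/p_-\geq 1/p_+$, for $|s_1-s_2|\leq 1$ one has $|s_1-s_2|^{1/p_-}\leq|s_1-s_2|^{1/p_+}$, so the maximum is dominated by $\max\{A,C_{q_-}\}|s_1-s_2|^{1/p_+}$, yielding the Hölder estimate of exponent $1/p_+$ for small increments. This furnishes the local Hölder modulus used in the sequel.

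The main (and really rather minor) obstacle is just to check carefully that the pointwise identity $|g|=|f|$ does transfer to the variable-exponent norm, which is routine once one writes out the definition of $\rho_{\p}$; everything else is a bookkeeping application of the two estimates proved in Section~\ref{sec-prop-psi}. No new analytic ingredient is required.
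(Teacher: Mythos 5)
Your proposal is correct and follows essentially the same route as the paper: the modulation identity $\Psi_f(s+h)-\Psi_f(s)=\Psi_{g}(h)$ with $g(t)=e^{-ist}f(t)$ and $\|g\|_{\p}=\|f\|_{\p}$ is exactly the paper's key step, after which both arguments invoke the quantitative bounds of Propositions~\ref{prop_bounded} and~\ref{coro_rapid_decay} (the paper via Remark~\ref{remark_asympt}). Your explicit restriction to small increments in the Hölder case matches the paper's implicit use of the $|s|\to 0^+$ asymptotics.
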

\begin{proof}
    By Remark \ref{remark_asympt}, it is clear that $\Psi_f$ possess the claimed type of continuity at $0$. To verify continuity elsewhere, notice that  for every non-zero $s,h\in\R$ we have that
    \begin{align*}
        |\Psi_f(s+h)-\Psi_f(s)|&=\left|\int_{-\infty}^{\infty}\left[\left(\frac{1-e^{-i(s+h)t}}{it}\right)-\left(\frac{1-e^{-ist}}{it}\right)\right]f(t)dt\right|\\
        &=\left|\int_{-\infty}^{\infty}e^{-ist}\left(\frac{1-e^{-iht}}{it}\right)f(t)dt\right|\\
        &=|\Psi_{g_s}(h)|,
    \end{align*}
    where $g_s(x)\defeq e^{-isx}f(x)$ is also in $\Lp$ and satisfies $\|g_s\|_{\p}=\|f\|_{\p}$. Therefore the continuity from the statement follows once again from Remark \ref{remark_asympt}.
\end{proof}

% \begin{example}
%     Let $p\in \mathcal{P}(\R)$ and suppose inequality \eqref{ineq_p} holds. Suppose also that $p(x)=C$ for all sufficiently large $x$. Let $f(x)=|x|^{-\frac{1}{C}}$. Then $\Psi_f=(...)$. Hence, $\Psi_f$ satisfies the inequality from Proposition \ref{prop_bounded}, but $f$ is not in $L^{p(\cdot)}(\R)$. Indeed
% \end{example}

\section{The Fourier transform and the spaces \texorpdfstring{$\mathcal{B}_{p(\cdot)}(\R)$}{} and \texorpdfstring{$\mathcal{A}_{\p}(\R)$}{}}\label{sec-fourier-transf}

In this section we provide an alternative definition for the Fourier transform of functions in $\Lp$,  inspired by the one given in \cite{Talvila2025} for functions in $L^p(\R)$. We also show that the set of all such Fourier transforms, equipped with an appropriate norm, is a Banach space isomorphic to $\Lp$.

\begin{theorem}\label{theo_1}
     Let $\p\in \mathcal{P}(\R)$ satisfy $p_{+}<\infty$ or $1/\p$ is $\LH$.
     \begin{enumerate}
         \item Define $\mathcal{B}_{p(\cdot)}(\R) = \{\Psi_f : f\in L^{p(\cdot)}(\R)\}$. For $f\in L^{\p}(\R)$, let $\|\Psi_f\|_{\mathcal{B}_{\p}(\R)}\defeq\|f\|_{\p}$. Then  $\left(\mathcal{B}_{p(\cdot)}(\R),\|\cdot\|_{\mathcal{B}_{\p}(\R)}\right)$ is a Banach space isometrically isomorphic to $L^{\p}(\R)$.
         \item Define $\mathcal{A}_{\p}(\R) = \{\Psi_f': f\in L^{\p}(\R)\}$, where the superscript $'$ denotes the distributional derivative. For $f\in L^{\p}(\R)$, let $\|\Psi_f'\|_{\mathcal{A}_{\p}(\R)}\defeq\|f\|_{\p}$. Then 
         \noindent $\left(\mathcal{A}_{\p}(\R),\|\cdot\|_{\mathcal{A}_{\p}(\R)}\right)$ is a Banach space isometrically isomorphic to $\Lp$.
     \end{enumerate}
\end{theorem}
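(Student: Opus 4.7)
The plan is to construct, for each of the two statements, an isometric linear bijection from $L^{\p}(\R)$ onto the candidate space; once this is in place, both the Banach property and the isometric isomorphism follow immediately from the fact that $L^{\p}(\R)$ is itself Banach. For (1), the map $T\colon f \mapsto \Psi_f$ is manifestly linear and its image is $\mathcal{B}_{\p}(\R)$ by definition; the same is true for $S\colon f \mapsto \Psi_f'$ in (2), using linearity of the distributional derivative. The crux in each case is injectivity, since that is precisely what makes the prescriptions $\|\Psi_f\|_{\mathcal{B}_{\p}(\R)} := \|f\|_{\p}$ and $\|\Psi_f'\|_{\mathcal{A}_{\p}(\R)} := \|f\|_{\p}$ unambiguous.

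To prove injectivity of $T$, I would identify $\Psi_f'$ (interpreted as a distribution) with the Fourier transform $\hat f$ of $f$ regarded as a tempered distribution. The pairing $\langle f, \psi\rangle := \int f\psi$ for $\psi \in \mathcal{S}(\R)$ is well defined: any Schwartz function has finite modular against any $\q\in\mathcal{P}(\R)$ (pick $\lambda$ large enough that $|\psi/\lambda|\le 1$ everywhere, so that $|\psi/\lambda|^{q(t)} \le |\psi/\lambda|^{q_-}$ is integrable), whence generalized Hölder gives $|\langle f,\psi\rangle| \le K_{\p}\|f\|_{\p}\|\psi\|_{\q}$. For $\phi\in\mathcal{S}(\R)$, invoking Fubini (justified by $|(1-e^{-ist})/(it)|\le \min(|s|,2/|t|)$ together with the rapid decay of $\phi'$ and of $\Psi_f(s) = O(|s|^{1/p_-})$ from Remark \ref{remark_asympt}) and integrating by parts in $s$, using $\partial_s[(1-e^{-ist})/(it)] = e^{-ist}$ and vanishing of the boundary term forced by the Schwartz decay of $\phi$, one obtains
\[
\langle \Psi_f', \phi\rangle \;=\; -\int_\R \Psi_f(s)\phi'(s)\,ds \;=\; \int_\R f(t)\hat\phi(t)\,dt \;=\; \langle \hat f, \phi\rangle.
\]
Thus $\Psi_f' = \hat f$ in $\mathcal{S}'(\R)$, and injectivity of the distributional Fourier transform yields: $\Psi_f \equiv 0 \Rightarrow \hat f = 0$ in $\mathcal{S}'(\R)\Rightarrow f = 0$ in $L^{\p}(\R)$.

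For injectivity of $S$, if $\Psi_f' = \Psi_g'$ in $\mathcal{D}'(\R)$ then $\Psi_{f-g}$ is a distribution with zero derivative, hence a constant; continuity of $\Psi_{f-g}$ together with the normalization $\Psi_h(0) = 0$ forces $\Psi_{f-g}\equiv 0$, and injectivity of $T$ then gives $f = g$. Consequently $T$ and $S$ are isometric linear bijections from the Banach space $L^{\p}(\R)$ onto the respective target spaces, which are therefore Banach and isometrically isomorphic to $L^{\p}(\R)$.

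The principal obstacle is the rigorous identification $\Psi_f' = \hat f$: Fubini requires a genuine double integrability bound (rather than just iterated integrability of $\Psi_f(s)\phi'(s)$), and the integration by parts must cleanly dispose of boundary contributions from a kernel $(1-e^{-ist})/(it)$ that is itself not decaying in $s$. The dichotomy $p_+<\infty$ \emph{or} $1/\p$ is $\LH$ is used precisely here, as it is exactly what guarantees the decay of $\Psi_f$ from Section \ref{sec-prop-psi} needed to make these steps work.
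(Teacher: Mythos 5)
Your overall architecture is the same as the paper's: exhibit $f\mapsto\Psi_f$ and $f\mapsto\Psi_f'$ as linear surjections, reduce everything to injectivity, and prove injectivity of the first map via the identity $-\int\Psi_f\varphi'=\int f\widehat{\varphi}$ obtained from Fubini and integration by parts; your treatment of part (2) (zero distributional derivative plus $\Psi_{f-g}(0)=0$ forces $\Psi_{f-g}\equiv 0$) is also essentially the paper's. The one genuinely different step is how you finish the injectivity of $f\mapsto\Psi_f$: you first check that $f\in L^{\p}(\R)$ defines a tempered distribution and then invoke injectivity of the Fourier transform on $\mathcal{S}'(\R)$, whereas the paper stays concrete, specializing $\widehat{\varphi}$ to Gaussians so that $f*\Theta_a\equiv 0$ for the heat kernel $\Theta_a$ and then citing the pointwise a.e.\ convergence $f*\Theta_a\to f$ from \cite{Cruz-Uribe}. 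Your route is shorter and more standard but leans on the (true, and easily checked) facts that $L^{\p}(\R)\hookrightarrow\mathcal{S}'(\R)$ and that an $L^1_{\mathrm{loc}}$ function vanishing as a distribution vanishes a.e.; the paper's route avoids both and keeps the argument inside the variable-exponent toolkit. One point where your sketch is loose: for Fubini what must actually be verified is that
\begin{equation*}
\int_{\R}\int_{\R}\min\{|s|,2/|t|\}\,|f(t)|\,|\varphi'(s)|\,dt\,ds<\infty,
\end{equation*}
which by generalized H\"older reduces to showing $\chi_{[-1,1]}\in L^{\q}(\R)$ and $\chi_{\{|t|>1\}}/|t|\in L^{\q}(\R)$; the latter is the real computational content and is where the dichotomy ``$p_{+}<\infty$ or $1/\p$ is $\LH$'' enters (via $q_{-}>1$ in the first case and via $q(t)\geq 1+\kappa\log|t|$ in the second). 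Attributing the Fubini step to the decay of $\Psi_f$ from Remark \ref{remark_asympt} is not quite right — that decay justifies the convergence of the iterated integral $\int\Psi_f\varphi'$ and the temperedness of $\Psi_f'$, but not the double integrability — so this verification should be carried out explicitly as the paper does.
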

\begin{proof}
    {\it (1)} Consider the mapping $\Lambda: L^{p(\cdot)}(\mathbb{R}) \rightarrow \mathcal{B}_{\p}(\mathbb{R})$ given by $\Lambda(f)=\Psi_f$. Note that $\Lambda$ is linear and onto $\mathcal{B}_{\p}(\mathbb{R})$. The statement will follow once we prove that $\Lambda$ is injective, which guarantees that $\|\cdot\|_{\mathcal{B}_{\p}(\R)}$ is a norm. To show it is injective suppose $f \in L^{p(\cdot)}(\mathbb{R})$ and $\Psi_f(s)=0$ for all $s \in \mathbb{R}$. For each $\varphi \in \mathcal{S}(\mathbb{R})$ applying Fubini's Theorem (which will be justified below) and integrating by parts yields
\begin{align}\label{eq_Fubini}
0 & =\int_{-\infty}^{\infty} \Psi_f(s) \varphi^{\prime}(s) {d} s=\int_{-\infty}^{\infty} f(t) \int_{-\infty}^{\infty}\left(\frac{1-{e}^{-{i} s t}}{{i} t}\right) \varphi^{\prime}(s) {d} s {~d} t \\
& =-\int_{-\infty}^{\infty} f(t) \int_{-\infty}^{\infty} {e}^{-{i} s t} \varphi(s) {d} s {~d} t=-\int_{-\infty}^{\infty} f(t) \widehat{\varphi}(t) {d} t.  \notag
\end{align}
Notice that Gaussian functions are in the Schwartz space and every Gaussian is the Fourier transform of a Gaussian. Therefore, $\int_{-\infty}^{\infty} f(t) \Theta_a(x-t) {d} t=0$ for each $a>0$, where $\Theta_a(t)={e}^{-t^2 /(4 a)} / \sqrt{4 \pi a}$ is the heat kernel. Then $f * \Theta_a(x)=0$ for each $x \in \mathbb{R}$ and $a>0$. On the other hand, by Theorem 5.8 of \cite{Cruz-Uribe} we have that $f * \Theta_a(x)\to f$ pointwise almost everywhere as $a\to 0^+$. Therefore it follows that $f\equiv0$. We conclude that $\|\cdot\|_{\mathcal{B}_{\p}(\R)}$ is indeed a norm and therefore $\Lambda$ is a surjective isometry between $\mathcal{B}_{p(\cdot)}(\R)$ and $L^{p(\cdot)}(\R)$, so these spaces are isomorphic.

We now justify the application of Fubini's Theorem in \eqref{eq_Fubini}. Using the estimates $\left|u_s(t)\right| \leq|s|$ for $|t| \leq 1$, and $\left|u_s(t)\right| \leq 2 /|t|$ for $|t| \geq 1$, we have that
\begin{align}
\int_{\R}\int_{\R}|u_s(t)||f(t)||\varphi'(s)|dt\ ds&\leq \int_{\R}\int_{|t|\leq 1}|s||f(t)||\varphi'(s)|dt\ ds+\int_{\R}\int_{|t|>1}\frac{2}{|t|}|f(t)||\varphi'(s)|dt\ ds\notag\\
&= \int_{\R}|s||\varphi'(s)|ds\int_{\R}\chi_{\{|t|\leq 1\}}(t)|f(t)|dt\notag\\
&+2\int_{\R}|\varphi'(s)|ds\int_{\R}\frac{\chi_{\{|t|>1\}}}{|t|}|f(t)|dt\notag\\
&\leq C_1K_{p(\cdot)}\|\chi_{\{|t|\leq 1\}}\|_{q(\cdot)}\|f\|_{p(\cdot)}+C_2K_{p(\cdot)}\left\|\frac{\chi_{\{|t|>1\}}}{|t|}\right\|_{q(\cdot)}\|f\|_{p(\cdot)},\label{ineq_Fubini_1}
\end{align}
for $0<C_1=\|s\varphi'(s)\|_{L^1(\R)},C_2=2\|\varphi'\|_{L^1(\R)}<\infty$, where in the last line we applied the generalized Hölder's inequality.
%If $p_{+}<\infty$, then if $\lambda\geq 1$ we have that
Notice that by \cite[Lemma 2.39]{Cruz-Uribe} we have that $\chi_{[-1,1]}$ is in $L^{\q}(\R)$. Similarly, if $1<p_{+}<\infty$, then 
\begin{align*}
    \rho_{q(\cdot)}\left(\frac{\chi_{\{|t|> 1\}}}{|t|}\right)&\leq\int_{\{|t|>1\}\backslash\R^{\q}_{\infty}}|t|^{-q(t)}dt+\delta\left(\R_\infty^{\q}\right)\\
    &\leq \int_{\{|t|>1\}}|t|^{-q_{-}}dt+1\\
    &=2(q_{-}-1)^{-1}+1<\infty.
\end{align*}
And so $\frac{\chi_{\{|t|> 1\}}}{|t|}$ is in $L^{\q}(\R)$ as well. If $p_+=1$, the $\q\equiv\infty$ and so clearly $\chi_{[-1,1]}$ is in $L^{\q}(\R)$.
% In the case where inequality \eqref{ineq_p_|x|^k} holds we have that
% \begin{align*}
%      \rho_{q(\cdot)}\left(\frac{\chi_{\{|t|> 1\}}}{|t|}\right)&\leq\int_{\{|t|>1\}\backslash\R^{\q}_{\infty}}|t|^{-q(t)}dt+1\\
%     &\leq 2\int_1^\infty t^{-(1+Ct^k)}dt+1
%     %&=2\int_1^\infty e^{-(1+Ct^k)\log(t)}dt+1
%     <\infty.
% \end{align*}
% Hence, $\frac{\chi_{\{|t|> 1\}}}{|t|}$ is also $L^{\q}(\R)$.
In the case where $1/\p$ is $\LH$, by inequality \eqref{ineq_LH} we have that
\begin{align*}
     \rho_{q(\cdot)}\left(\frac{\chi_{\{|t|> 1\}}}{|t|}\right)&\leq\int_{\{|t|>1\}\backslash\R^{\q}_{\infty}}|t|^{-q(t)}dt+\delta\left(\R_\infty^{\q}\right)\\
    &\leq 2\int_{1}^Mt^{-1}dt + 2\int_M^\infty t^{-(1+\kappa\log(|t|))}dt+1
    %&=2\int_1^\infty e^{-(1+Ct^k)\log(t)}dt+1
    <\infty,
\end{align*}
for some $\kappa,M>0$. Therefore $\frac{\chi_{\{|t|> 1\}}}{|t|}$ is in $L^{\q}(\R)$ as well. Applying these facts to inequality \eqref{ineq_Fubini_1}, we conclude that the integral is finite, justifying the application of Fubini's Theorem.

{\it (2)} Notice that the distributional derivative provides a linear surjective mapping from $\mathcal{B}_{p(\cdot)}(\R)$ to $\mathcal{A}_{p(\cdot)}(\R)$. Moreover, it is injective due to the fact that the estimates $|\Psi_f(s)|\lesssim \|f\|_{\p}|s|$ or $|\Psi_f(s)|\lesssim \|f\|_{\p}|s|^{\frac{1}{p_{+}}}$ as $|s|\to0^+$ (see Remark \ref{remark_asympt}) imply that the only constant in $\mathcal{B}_{\p}(\R)$ is $\Psi_f\equiv0$. We conclude from the definition that $\|\cdot\|_{\mathcal{A}_{\p}(\R)}$ is indeed a norm and therefore $\mathcal{B}_{p(\cdot)}(\R)$, $\mathcal{A}_{p(\cdot)}(\R)$ and $\Lp$ are all isomorphic to each other.
\end{proof}

\begin{definition}\label{definition_Fourier}
    Consider $\p\in\mathcal{P}(\R)$ such that $p_{+}<\infty$ or $1/\p$ is $\LH$. Define the Fourier transform as $\mathcal{F}:\Lp\to \mathcal{A}_{\p}(\R)$ by $\mathcal{F}(f)\equiv\widehat{f}\defeq\Psi_f'$.
    Or equivalently,
    $$\langle \widehat{f},\varphi\rangle=-\langle \Psi_f,\varphi'\rangle=-\int_{-\infty}^{\infty}\Psi_f(s)\varphi'(s)ds,$$ for each $\varphi\in\mathcal{S}(\R)$. The space of Fourier transforms on $L^{\p}(\R)$ is then denoted by $\mathcal{A}_{\p}(\R)$. 
\end{definition}

Notice that the definition above can be seen as an extension of the definition given in \cite{Talvila2025} for constant $p\neq \infty$.

\section{Properties of the Fourier transform}\label{sec-prop-fourier}

First note that as in \cite{Talvila2025}, since $\Psi_f$ is continuous and of polynomial growth if $f\in \Lp$ such that $p_+<\infty$ or $1/\p$ is $\LH$, then $\widehat{f}$ defined in Definition \ref{definition_Fourier} is a tempered distribution. Therefore the elements of $\mathcal{A}_{\p}(\R)$ inherit the properties of the Fourier transforms in $\mathcal{S}'(\R)$. In this section we will also prove other properties related to our definition of the Fourier transform.

\begin{prop}
    Definition \ref{definition_Fourier} agrees with the definition of the Fourier transform of a tempered distribution.
\end{prop}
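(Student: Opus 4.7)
The plan is to unpack both definitions and reduce everything to the computation already performed in equation \eqref{eq_Fubini} inside the proof of Theorem \ref{theo_1}. Recall that the Fourier transform of a tempered distribution $T\in\mathcal{S}'(\R)$ is characterized by $\langle \widehat{T},\varphi\rangle = \langle T,\widehat{\varphi}\rangle$ for every $\varphi\in\mathcal{S}(\R)$. So the task amounts to: first, interpret $f\in L^{\p}(\R)$ itself as a tempered distribution; and second, check that the distribution $\Psi_f'$ from Definition \ref{definition_Fourier} acts on Schwartz test functions exactly like $\langle f,\widehat{\varphi}\rangle$.

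First, I would verify that $f\in L^{\p}(\R)$ induces a tempered distribution via $\varphi\mapsto \int f\varphi$. Since every Schwartz function lies in $L^{\q}(\R)$ (the modular $\rho_{\q}(\varphi/\lambda)$ is clearly finite for large $\lambda$, using rapid decay of $\varphi$ to handle the $\R\setminus\R_\infty^{\q}$ part and the boundedness of $\varphi$ to handle $\R_\infty^{\q}$), the generalized Hölder inequality gives
\[
|\langle f,\varphi\rangle| \leq K_{\p}\|f\|_{\p}\|\varphi\|_{\q},
\]
and continuity in the Schwartz topology follows from standard seminorm estimates on $\|\varphi\|_{\q}$.

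Next, by Definition \ref{definition_Fourier}, for every $\varphi\in\mathcal{S}(\R)$,
\[
\langle \widehat{f},\varphi\rangle = -\int_{-\infty}^{\infty}\Psi_f(s)\varphi'(s)\,ds.
\]
The key step is the Fubini--integration-by-parts identity
\[
\int_{-\infty}^{\infty}\Psi_f(s)\varphi'(s)\,ds
= \int_{-\infty}^{\infty}f(t)\int_{-\infty}^{\infty}\!\left(\frac{1-e^{-ist}}{it}\right)\!\varphi'(s)\,ds\,dt
= -\int_{-\infty}^{\infty}f(t)\widehat{\varphi}(t)\,dt,
\]
which was already established in equation \eqref{eq_Fubini} of Theorem \ref{theo_1} (the inner integration by parts is legitimate because $\varphi$ is Schwartz, so all boundary terms vanish, and Fubini is justified by the bound \eqref{ineq_Fubini_1} derived there). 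Putting the two displays together gives
\[
\langle \widehat{f},\varphi\rangle = \int_{-\infty}^{\infty}f(t)\widehat{\varphi}(t)\,dt = \langle f,\widehat{\varphi}\rangle,
\]
which is precisely the definition of the Fourier transform of the tempered distribution $f$, finishing the proof.

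The main (very modest) obstacle is purely bookkeeping: I must be sure that the interchange of integrals and the integration by parts in $s$ apply under the current standing hypotheses ($p_+<\infty$ or $1/\p$ is $\LH$), but both are already covered by the argument in Theorem \ref{theo_1}, so nothing new needs to be proved.
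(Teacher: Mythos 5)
Your proposal is correct and follows essentially the same route as the paper: the paper's proof is exactly the Fubini--integration-by-parts computation from equation \eqref{eq_Fubini}, yielding $\langle \widehat{f},\varphi\rangle=\langle f,\widehat{\varphi}\rangle$. Your additional preliminary check that $f$ itself defines a tempered distribution (via $\mathcal{S}(\R)\subset L^{\q}(\R)$ and generalized H\"older) is left implicit in the paper but is a reasonable piece of bookkeeping, not a different argument.
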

\begin{proof}
    Indeed, as in the proof of Theorem \ref{theo_1}, we have that
\begin{align*}
\langle\widehat{f},\varphi\rangle&= -\int_{-\infty}^{\infty} \Psi_f(s) \varphi^{\prime}(s) {d} s=-\int_{-\infty}^{\infty} f(t) \int_{-\infty}^{\infty}\left(\frac{1-{e}^{-{i} s t}}{{i} t}\right) \varphi^{\prime}(s) {d} s {~d} t \\
& =\int_{-\infty}^{\infty} f(t) \int_{-\infty}^{\infty} {e}^{-{i} s t} \varphi(s) {d} s {~d} t=\int_{-\infty}^{\infty} f(t) \widehat{\varphi}(t) {d} t=\langle f,\widehat{\varphi}\rangle.
\end{align*}
\end{proof}
\begin{prop}
     Consider $\p\in\mathcal{P}(\R)$ such that $p_{+}<\infty$ or $1/\p$ is $\LH$. Let $n\in\N$ and $F^{(n-1)}$ be absolutely continuous such that $F^{(n)} \in L^{\p}(\mathbb{R})$. For each $0 \leqslant k \leqslant$ $n-2$, suppose that $\lim _{|t| \rightarrow \infty} F^{(k)}(t) / t=0$. Then $\widehat{F^{(n)}}=\Psi_{F^{(n)}}^{\prime}$, where
\begin{equation}\label{eq_F^(n)}
\Psi_{F^{(n)}}(s)=\int_{-\infty}^{\infty} \frac{n!}{{i} t^{n+1}}\left[1-{e}^{-{i} s t} \sum_{k=0}^n \frac{({i} s t)^k}{k!}\right] F(t) {d} t.
\end{equation}
\end{prop}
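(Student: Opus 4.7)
The identity $\widehat{F^{(n)}}=\Psi_{F^{(n)}}'$ follows immediately from Definition \ref{definition_Fourier} applied to $F^{(n)}\in L^{\p}(\R)$, so the substance lies in the integral formula. I would derive it by $n$ iterated integrations by parts starting from the defining integral
\begin{equation*}
\Psi_{F^{(n)}}(s)=\int_{-\infty}^{\infty}\frac{1-e^{-ist}}{it}\,F^{(n)}(t)\,dt,
\end{equation*}
where each IBP strips one derivative off $F^{(j)}$ while promoting the current kernel to the next element of the sequence
\begin{equation*}
\Phi_j(s,t)\defeq\frac{j!}{it^{j+1}}\left[1-e^{-ist}\sum_{k=0}^{j}\frac{(ist)^k}{k!}\right],\qquad j=0,1,\ldots,n,
\end{equation*}
so that $\Phi_0(s,t)=(1-e^{-ist})/(it)$ and the target identity becomes $\Psi_{F^{(n)}}(s)=\int\Phi_n(s,t)F(t)\,dt$. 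The crux is the telescoping identity $\partial_t\Phi_{j-1}(s,t)=-\Phi_j(s,t)$, which I would verify by direct calculation using $\partial_t\bigl[e^{-ist}\sum_{k=0}^{j-1}(ist)^k/k!\bigr]=-\frac{(is)^jt^{j-1}}{(j-1)!}e^{-ist}$ (interior terms telescope) combined with the product rule applied to $(it^j)^{-1}$.

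Granted this, iterating IBP $n$ times converts $\int\Phi_0 F^{(n)}\,dt$ into $\int\Phi_n F\,dt$ plus accumulated boundary contributions $[\Phi_j(s,t)F^{(n-j-1)}(t)]_{-\infty}^{\infty}$ for $j=0,1,\ldots,n-1$. To dispose of these I would use the asymptotic $\Phi_j(s,t)=O(|s|^j/|t|)$ as $|t|\to\infty$ (the bracket is $O(|st|^j)$ against the prefactor $t^{-(j+1)}$), together with the hypothesis $F^{(k)}(t)/t\to 0$ for $0\le k\le n-2$, which kills every boundary term with $j\ge 1$. The outstanding $j=0$ term requires $F^{(n-1)}(t)/t\to 0$, which is not in the explicit hypothesis but can be derived from $F^{(n)}\in L^{\p}(\R)$ itself: writing $F^{(n-1)}(t)=F^{(n-1)}(0)+\int_0^t F^{(n)}$ and applying the generalized Hölder inequality yields $|F^{(n-1)}(t)|\le |F^{(n-1)}(0)|+K_{\p}\|F^{(n)}\|_{\p}\|\chi_{[0,t]}\|_{\q}$. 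When $p_+<\infty$, a direct modular estimate gives $\|\chi_{[0,t]}\|_{\q}\le t^{1/q_-}=t^{1-1/p_+}=o(t)$; when $1/\p$ is $\LH$, the computation in the proof of Proposition \ref{coro_rapid_decay} shows $\|\chi_{[0,t]}\|_{\q}$ is uniformly bounded in $t$. Either way, $F^{(n-1)}(t)/t\to 0$.

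The main obstacle I expect is the bookkeeping of absolute convergence at each intermediate IBP step, since one must simultaneously control $\Phi_j$ near $t=0$ (where the Taylor expansion of $e^{-ist}\sum_{k=0}^j(ist)^k/k!$ around $t=0$ makes the bracket vanish to high enough order that the quotient is bounded there) and at infinity, matched against the relevant derivative of $F$. A secondary delicate point is the uniform verification of the auxiliary decay $F^{(n-1)}(t)/t\to 0$ across the two hypothesis regimes on $\p$, since the behaviour of $\|\chi_{[0,t]}\|_{\q}$ differs qualitatively between $p_+<\infty$ and $1/\p\in\LH$ and must be handled case by case.
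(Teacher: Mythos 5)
Your proposal is correct and follows essentially the same route as the paper: the telescoping identity for the derivatives of $u_s$ (stated there by induction as $u_s^{(j)}(t)=\frac{(-1)^j j!}{it^{j+1}}[1-e^{-ist}\sum_{k=0}^j (ist)^k/k!]$), $n$ integrations by parts with boundary terms killed by the hypotheses on $F^{(k)}$ for $k\le n-2$, and the same derivation of the remaining decay $F^{(n-1)}(x)/x\to 0$ via the fundamental theorem of calculus, the generalized H\"older inequality, and a two-case estimate of $\|\chi_{[0,x]}\|_{\q}$ (growth $O(|x|^{1/q_-})$ when $p_+<\infty$, uniform boundedness when $1/\p$ is $\LH$). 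There are no substantive differences.
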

\begin{proof}
      First notice that induction on $n\in\N_0$ establishes the formula
    $$
    u_s^{(n)}(t)=\frac{(-1)^n n!}{{i} t^{n+1}}\left[1-{e}^{-{i} s t} \sum_{k=0}^n \frac{({i} s t)^k}{k!}\right].
    $$
    Notice that $u_s$ and its derivatives are continuous and of decay like $1 / t$ as $|t| \rightarrow \infty$. Equality \eqref{eq_F^(n)} follows after an application of integration by parts $n$ times on $\Psi_{F^{(n)}}(s)=\int_{-\infty}^{\infty} u_s(t) F^{(n)}(t) {d} t$,  since $\lim_{|x|\to\infty}F^{(n-1)}(x)/x=0$, as shown next. Indeed, notice that because $F^{(n-1)}$ is absolutely continuous, we have that $F^{(n-1)}(x)=F^{(n-1)}(0)+\int_0^x F^{(n)}(t) {d} t$. Applying the generalized Hölder's inequality then gives 
    \begin{equation*}
        \left|F^{(n-1)}(x)\right| \leq\left|F^{(n-1)}(0)\right|+K_{\p}\left\|F^{(n)}\right\|_{\p}\|\chi_{[0,x]}\|_{q(\cdot)},
    \end{equation*}
    for every $x\in\R$. Let us estimate $\|\chi_{[0,x]}\|_{q(\cdot)}$ for large $x$. Suppose without loss of generality that $x> 0$. Note that for $\lambda>0$ we have that
    \begin{align*}
\rho_{\q}\left(\chi_{[0,x]}/\lambda\right)&=\int_{\R\backslash \R^{\q}_{\infty}}\left(\frac{\chi_{[0,x]}(t)}{\lambda}\right)^{q(t)}dt+\frac{1}{\lambda}\|\chi_{[0,x]}\|_{L^\infty(\R^{\q}_\infty)}\\
&\leq \int_0^x\lambda^{-q(t)}dt+\frac{1}{\lambda}.
    \end{align*}
    Assume first that $p_{+}<\infty$. If $p_{+}=1$, then $\q\equiv \infty$, so $\|\chi_{[0,x]}\|_{q(\cdot)}=1$. Next assume that $p_{+}>1$. Then considering $1\leq \lambda\leq |x|^{p_{+}-1}$, we have that 
    \begin{align*}
        \rho_{\q}\left(\chi_{[0,x]}/\lambda\right)&\leq \int_0^x\lambda^{-q_{-}}dt+\lambda^{-1}\\
        &=\lambda^{-q_{-}}\left(|x|+\lambda^{(q_{-}-1)}\right)\leq \lambda^{-q_{-}}2|x|,
    \end{align*}
    where we used the fact that  $q_{-}-1=\frac{1}{p_{+}-1}$ and so $\lambda^{(q_{-}-1)}\leq \left(|x|^{p_{+}-1}\right)^{\frac{1}{p_{+}-1}}=|x|$.
    Therefore, if $|x|^{p_{+}-1}\geq \lambda\geq \max\left\{2^{\frac{1}{q_{-}}}|x|^{\frac{1}{q_{-}}},1\right\}$ we have that $\rho_{\q}(\chi_{[0,x]}/\lambda)\leq 1$. 
    If $|x|$ is large enough, then clearly $2^{\frac{1}{q_{-}}}|x|^{\frac{1}{q_{-}}}\geq 1$ and $|x|^{p_{+}-1}\geq 2^{\frac{1}{q_{-}}}|x|^{\frac{1}{q_{-}}}=2^{\frac{1}{q_{-}}}|x|^{\frac{p_{+}-1}{p_{+}}}$.
    %and $q_{-}=\frac{p_{+}-1}{p_{+}}$.
    Therefore $\|\chi_{[0,x]}\|_{\q}\leq 2^{\frac{1}{q_{-}}}|x|^{\frac{1}{q_{-}}}$ for $|x|$ large enough. This implies that
    \begin{align*}   \lim_{|x|\to\infty}\frac{|F^{(n-1)}(x)|}{|x|}&\leq \lim_{|x|\to\infty}\frac{\left|F^{(n-1)}(0)\right|+K_{\p}\left\|F^{(n)}\right\|_{\p}\|\chi_{[0,x]}\|_{q(\cdot)}}{|x|}\\
    &\leq \lim_{|x|\to\infty}K_{\p}\|F^{(n)}\|_{\p}2^{q_{-}}|x|^{\frac{1}{q_{-}}-1}=0.
    \end{align*}
  It follows that $\lim _{|t| \rightarrow \infty} F^{(n-1)}(t) / t=0$.
  % Next assume that inequality \eqref{ineq_p_|x|^k} holds. Then if $\lambda\geq e$ we have that
  % \begin{align*}
  %     \rho_{\q}\left(\chi_{[0,x]}/\lambda\right)&\leq \int_0^x\lambda^{-(1+C|t|^k)}dt+\lambda^{-1}\\
  %       &=\int_0^xe^{-(1+Ct^k)\log(\lambda)}dt+\lambda^{-1}\\
  %       &=\lambda^{-1}\int_0^xe^{-Ct^k\log(\lambda)}dt+\lambda^{-1}\\
  %       &\leq \lambda^{-1}\left(\int_0^\infty e^{-Ct^k\log(\lambda)}dt+1\right)\\
  %       &\leq\lambda^{-1}\left(\int_0^\infty e^{-Ct^k}dt+1\right)\\
  %       &=C'\lambda^{-1},
  % \end{align*}
  % for some $0<C'<\infty$. Therefore if $\lambda\geq C''\defeq\max\{e,C'\}$ we have that  $\rho_{\q}(\chi_{[0,x]}/\lambda)\leq 1$, so in this case $\|\chi_{[0,x]}\|_{\q}\leq C''$. This implies that
  %   \begin{align*}   \lim_{|x|\to\infty}\frac{|F^{(n-1)}(x)|}{|x|}&\leq \lim_{|x|\to\infty}\frac{\left|F^{(n-1)}(0)\right|+K_{\p}\left\|F^{(n)}\right\|_{\p}\|\chi_{[0,x]}\|_{q(\cdot)}}{|x|}\\
  %   &\leq C''\lim_{|x|\to\infty}K_{\p}\|F^{(n)}\|_{\p}|x|^{-1}=0.
  %   \end{align*}
    Finally, suppose that $1/\p$ is $\LH$ . Then there exist $M,\kappa>0$,  such that $p(x)\leq 1+\kappa^{-1}\log(|x|)^{-1}$ for almost every $|x|>M$. Therefore for $\lambda>e^{2\kappa^{-1}}$, we have that
    \begin{align*}
\rho_{\q}\left(\chi_{[0,x]}/\lambda\right)&\leq \int_{[0,x]\backslash\R^{\q}_1}\lambda^{-q(t)}dt+\lambda^{-1}\delta\left(\R^{\q}_\infty\cap [0,x]\right)\\
        &\leq \int_0^{M}\lambda^{-1}dt+  \lambda^{-1}\int_M^\infty e^{-\kappa\log(t)\log(\lambda)}dt+\lambda^{-1}\\
        &\leq \lambda^{-1}\left(M+\int_M^\infty t^{-\kappa\log(\lambda)}dt+1\right)\\
        &\leq \lambda^{-1}\left(M+\int_M^\infty t^{-2} dt+1\right)\\
        &=C'\lambda^{-1},
  \end{align*}
  for $C'=M+(2M)^{-1}+1<\infty$.  
  If $\lambda\geq C''=\max\{e^{2\kappa^{-1}},C'\}$ we have that  $\rho_{\q}(\chi_{[0,x]}/\lambda)\leq 1$, and therefore $\|\chi_{[0,x]}\|_{\q}\leq C''$. This implies that
    \begin{align*}   \lim_{|x|\to\infty}\frac{|F^{(n-1)}(x)|}{|x|}&\leq \lim_{|x|\to\infty}\frac{\left|F^{(n-1)}(0)\right|+K_{\p}\left\|F^{(n)}\right\|_{\p}\|\chi_{[0,x]}\|_{q(\cdot)}}{|x|}\\
    &\leq \lim_{|x|\to\infty}K_{\p}\|F^{(n)}\|_{\p}C''|x|^{-1}=0,
    \end{align*}
    finishing the proof.
  %If $C=1$, then $\p\equiv 1$ and $\q\equiv \infty$.
  %Thus we conclude that $\left|F^{(n-1)}(x)\right| \leq\left|F^{(n-1)}(0)\right|+\left\|F^{(n)}\right\|_{L^1(\R)}$ and so the same limit holds.
\end{proof}
\begin{corollary}
     Consider $\p\in\mathcal{P}(\R)$ such that $p_{+}<\infty$ or $1/\p$ is $\LH$. Let $f$ be absolutely continuous such that $f' \in L^{\p}(\mathbb{R})$. Then $\widehat{f'}=\Psi_{f'}^{\prime}$, where
\begin{equation}
\Psi_{f'}(s)=\int_{-\infty}^{\infty} \frac{1}{{i}t^2}\left[1-{e}^{-{i} s t}(1+ist)\right] f(t) {d} t.
\end{equation}
\end{corollary}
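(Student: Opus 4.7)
The plan is to observe that this corollary is simply the $n=1$ case of the preceding proposition, so no genuinely new work is required; the task is only to verify that the hypotheses match and that the formula specializes correctly.

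First I would identify $F = f$ so that $F^{(n-1)} = F^{(0)} = f$ (absolutely continuous by hypothesis) and $F^{(n)} = f' \in L^{\p}(\R)$ (again by hypothesis). The auxiliary decay condition in the proposition, namely $\lim_{|t|\to\infty} F^{(k)}(t)/t = 0$ for $0 \leq k \leq n-2$, becomes vacuous when $n = 1$ (the index range $0 \leq k \leq -1$ is empty), so there is nothing extra to check. Therefore the proposition applies, giving $\widehat{f'} = \Psi_{f'}'$ with
\begin{equation*}
\Psi_{f'}(s) = \int_{-\infty}^{\infty} \frac{1!}{it^{2}}\left[1 - e^{-ist}\sum_{k=0}^{1}\frac{(ist)^{k}}{k!}\right] f(t)\, dt.
\end{equation*}

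Finally I would expand the sum $\sum_{k=0}^{1}(ist)^{k}/k! = 1 + ist$ and substitute to obtain exactly the claimed expression
\begin{equation*}
\Psi_{f'}(s) = \int_{-\infty}^{\infty} \frac{1}{it^{2}}\left[1 - e^{-ist}(1 + ist)\right] f(t)\, dt.
\end{equation*}

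There is no real obstacle here: the only thing to be careful about is confirming that the vacuous-hypothesis reading at $n=1$ is legitimate, which it is since the induction base in the proposition's proof (integration by parts once) uses only $F$ absolutely continuous and $F' \in L^{\p}(\R)$. Thus the corollary follows immediately.
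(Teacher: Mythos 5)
Your proposal is correct and matches the paper's (implicit) intent exactly: the corollary is stated without proof precisely because it is the $n=1$ specialization of the preceding proposition, with the decay hypotheses vacuous since the index range $0\leq k\leq n-2=-1$ is empty, and the required limit $\lim_{|t|\to\infty}f(t)/t=0$ is derived inside the proposition's proof from absolute continuity of $f$ and $f'\in L^{\p}(\R)$. The formula specialization $\sum_{k=0}^{1}(ist)^k/k!=1+ist$ is also verified correctly.
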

\begin{prop}\label{prop_properties}
    Consider $\p\in\mathcal{P}(\R)$ such that $p_{+}<\infty$ or $1/\p$ is $\LH$ and fix  $f\in\Lp$. For any function $g:\R\to\mathbb{C}$, denote $\tau_ag(x)=g(x-a)$ and $\tilde{g}(x)=g(-x)$, for every $a,x\in\R$. Then
    \begin{enumerate}
               \item  The Fourier transform is a bounded linear isomorphism and isometry $\mathcal{F}$ : $L^{\p}(\mathbb{R}) \to \mathcal{A}_{\p}(\mathbb{R})$, that is, $\|\hat{f}\|_{\mathcal{A}_{\p}(\R)}=\|f\|_{\p}$;

\item $\hat{f}^{(n)}=\Psi_f^{(n+1)}$;
\item For $a \in \mathbb{R}$, we have that  $\widehat{\tau_a f}=\Psi_{\tau_a f}'$, and $\Psi_{\tau_a f}(s)=f * \tilde{u}_s(-a)$;
\item For $a \in \mathbb{R}$ and $F(t)={e}^{{i} a t} f(t)$, we have that $\widehat{F}=\left(\tau_a \Psi_f\right)^{\prime}$;
\item $\hat{\tilde{f}}=\Psi_{\tilde{f}}^{\prime}$, and $\Psi_{\tilde{f}}=-\widetilde{\Psi}_f$;
\item If $\hat{f} \in L^{r(\cdot)}(\mathbb{R})$ for some $r(\cdot)\in \mathcal{P}(\R)$ such that $r_{+}<\infty$ or $1/r(\cdot)$ is $\LH$, then $\hat{\hat{f}}=\Psi_{\widehat{f}}'$;
\item Let $g(x)=f(a x+b)$ for $a, b \in \mathbb{R}$ with $a \neq 0$. Then $\hat{g}=\Psi_g^{\prime}$, where $\Psi_g(s)=$ $\operatorname{sgn}(a) \Psi_{\tau_{-b} f}(s / a)$.
    \end{enumerate}
\end{prop}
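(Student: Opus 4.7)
The plan is to verify each item by direct manipulation of the defining integral of $\Psi_{(\cdot)}$, combined with the identity $\widehat{f}=\Psi_f'$ in $\mathcal{S}'(\R)$. A recurring observation is that whenever two candidate expressions for $\Psi_{(\cdot)}$ differ by an $s$-independent constant, the discrepancy is annihilated by the distributional derivative, so such constants may be dropped when matching the stated formulas. Throughout, the standing hypothesis $p_+<\infty$ or $1/\p\in\LH$ is in force, which ensures via Section \ref{sec-prop-psi} that each $\Psi_{(\cdot)}$ is a continuous function of polynomial growth and so its derivative is a tempered distribution.

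Items (1) and (2) are essentially definitional. Item (1) is a direct restatement of Theorem \ref{theo_1}(2), since $\|\widehat{f}\|_{\mathcal{A}_{\p}(\R)}=\|\Psi_f'\|_{\mathcal{A}_{\p}(\R)}=\|f\|_{\p}$ was built into the definition and $f\mapsto\Psi_f'$ was already shown to be a linear bijection. Item (2) follows by iterating the distributional derivative on $\widehat{f}=\Psi_f'$.

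For items (3)--(5), (7) the computations go as follows. In (3), the substitution $y=t-a$ in $\Psi_{\tau_a f}(s)=\int_{\R}\frac{1-e^{-ist}}{it}f(t-a)\,dt$ rewrites the integral as $\int_{\R}f(y)\,\widetilde{u}_s(-a-y)\,dy=(f*\widetilde{u}_s)(-a)$, after identifying the kernel $(1-e^{-is(y+a)})/(i(y+a))$ with $\widetilde{u}_s(-a-y)$. For (4), computing $\Psi_F(s)=\int_{\R}\frac{(1-e^{-ist})e^{iat}}{it}f(t)\,dt$ and subtracting $\Psi_f(s-a)=\int_{\R}\frac{1-e^{-i(s-a)t}}{it}f(t)\,dt$ produces the $s$-independent quantity $\int_{\R}\frac{1-e^{iat}}{it}f(t)\,dt=-\Psi_f(-a)$, so $\tau_a\Psi_f$ and $\Psi_F$ share the same distributional derivative. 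Item (5) follows from the change of variables $y\mapsto -y$ in $\Psi_{\widetilde f}(s)$ together with the elementary identity $(1-e^{-is(-y)})/(i(-y))=-(1-e^{isy})/(iy)$, yielding $\Psi_{\widetilde f}(s)=-\Psi_f(-s)$. For (7), substituting $u=at+b$ produces a Jacobian $1/|a|$ which combines with the factor $a$ appearing in the denominator of the kernel to leave $\mathrm{sgn}(a)$; the resulting integral is exactly $\Psi_{\tau_{-b}f}(s/a)$. Finally, (6) is a direct application of Definition \ref{definition_Fourier} with $\widehat f$ playing the role of $f$, which is legitimate precisely under the admissibility hypothesis on $r(\cdot)$.

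The main technical obstacle is justifying the applications of Fubini and the changes of variables for a general $f\in L^{\p}(\R)$. This follows the template of the Fubini argument in the proof of Theorem \ref{theo_1}(1): one splits the $t$-integral into $|t|\leq 1$ and $|t|\geq 1$, uses the bounds $|u_s(t)|\leq|s|$ and $|u_s(t)|\leq 2/|t|$ respectively, and applies the generalized Hölder inequality using that $\chi_{[-1,1]}$ and $\chi_{\{|t|\geq 1\}}/|t|$ both lie in $L^{\q}(\R)$ under the standing assumption on $\p$. Once each $\Psi_{(\cdot)}$ is secured as a well-defined continuous function of polynomial growth, the asserted identities hold in $\mathcal{S}'(\R)\supset\mathcal{A}_{\p}(\R)$ after a single distributional derivative, completing the proof.
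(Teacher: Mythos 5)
Your proposal is correct and follows essentially the same route as the paper: items (1), (2), (3), (6) from the definitions, item (4) by observing that $\Psi_F$ and $\tau_a\Psi_f$ differ by the $s$-independent constant $\Psi_f(-a)$ (which the distributional derivative kills), and items (5), (7) by elementary changes of variables, with the Fubini/integrability issues handled exactly as in Theorem \ref{theo_1}. The only blemish is a transcription slip in (4): the $s$-independent difference is $\int_{\R}\frac{e^{iat}-1}{it}f(t)\,dt$, not $\int_{\R}\frac{1-e^{iat}}{it}f(t)\,dt$, though your identification of it as $-\Psi_f(-a)$ is the correct one.
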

\begin{proof}
    Claim {\it (1)} follows from Definition \ref{definition_Fourier} and Theorem \ref{theo_1}.
Claims \noindent{\it (2), (3)} and {\it (6)} 
%$\left\langle D^n \hat{f}, \varphi\right\rangle=\left\langle\Psi_f^{(n+1)}, \varphi\right\rangle=(-1)^{n+1}\left\langle\Psi_f, \varphi^{(n+1)}\right\rangle$. Integration by parts gives another formula for the derivative of a Fourier transform.
follow immediately from the definitions.

\noindent For the proof of {\it (4)}, notice that
$$
\Psi_F(s)=\int_{-\infty}^{\infty}\left(\frac{1-{e}^{-{i}(s-a) t}}{{i} t}-\frac{1-{e}^{{i} a t}}{{i} t}\right) f(t) {d} t=\Psi_f(s-a)-\Psi_f(-a),
$$
therefore $\widehat{F}=\left(\tau_a \Psi_f\right)^{\prime}$.

\noindent For {\it (5)}, note that from the definition of $\tilde{f}$ and performing a change of variables, we have that
$$
\Psi_{\tilde{f}}(s)=\int_{-\infty}^{\infty}\left(\frac{1-{e}^{-{i} s t}}{{i} t}\right) f(-t) {d} t=-\int_{-\infty}^{\infty}\left(\frac{1-{e}^{{i} s t}}{{i} t}\right) f(t) {d} t.
$$
\noindent Finally, {\it (7)} follows from a change of variables in the integral which defines $\Psi_g$.
\end{proof}

\begin{example} 
Here we present a couple of examples of  conditions which guarantee that $\widehat{f}\in\Lp$, as in Proposition \ref{prop_properties} {(6)}. The ideas for these examples were adapted from \cite{Talvila2025}.

    First suppose that $f$ is absolutely continuous and $f,f'\in L^1(\R)$. This conditions imply that $\widehat{f},\widehat{f'}$ are continuous and bounded, and $\widehat{f}(s)=\widehat{f'}(s)/is$. Hence $\widehat{f}\in L^{\p}(\R)$ for every $\p\in\mathcal{P}(\R)$ satisfying $1<p_{-},p_{+}<\infty$. Indeed, notice that in this case
    \begin{align*}
        \rho_{\p}\Big(\hat{f}\Big)&=\int_{-1}^1 |\widehat{f}(x)|^{p(x)}dx+\int_{\{|x|>1\}}|\widehat{f}(x)|^{p(x)}dx\\
        &\leq 2\max\{\|\widehat{f}\|_{\infty}^{p_{-}},\|\widehat{f}\|_{\infty}^{p_{+}}\}
        +\int_{\{|x|>1\}}\left|\frac{\widehat{f'}(x)}{x}\right|^{p(x)}dx\\
        &\leq C_f+\int_{\{x:|x|>1 \text{ and }|\widehat{f'}(x)/x|\geq 1\}} \left|\frac{\widehat{f'}(x)}{x}\right|^{p_{+}}dx+\int_{\{x:|x|>1 \text{ and }|\widehat{f'}(x)/x|< 1\}} \left|\frac{\widehat{f'}(x)}{x}\right|^{p_{-}}dx\\
        &\leq C_f+\int_{\{|x|>1\}} \left|\frac{\widehat{f'}(x)}{x}\right|^{p_{+}}dx+\int_{\{|x|>1\}} \left|\frac{\widehat{f'}(x)}{x}\right|^{p_{-}}dx\\
        &\leq C_f+\int_{\{|x|>1\}} \frac{\|\widehat{f'}\|^{p_+}_\infty}{|x|^{p_{+}}}dx+\int_{\{|x|>1\}} \frac{\|\widehat{f'}\|^{p_-}_\infty}{|x|^{p_{-}}}dx<\infty,
    \end{align*}
    where $C_f= 2\max\{\|\widehat{f}\|_{\infty}^{p_{-}},\|\widehat{f}\|_{\infty}^{p_{+}}\}$, and where in the last line we used the fact that $\widehat{f'}$ is bounded and $1<p_{-},p_{+}$. This proves that $\widehat{f}\in \Lp$.

    Next, suppose that $f\in L^1(\R)$ and is of bounded variation. Integration by parts implies that $\widehat{f}(s)=(1/(is))\int_{-\infty}^\infty e^{-ist}df(t)$. Then $|\widehat{f}(s)|\leq Vf/|s|$, where $Vf<\infty$ denotes the total variation of $f$. We claim that $\widehat{f}\in \Lp$ for $1<p_{-},p_{+}<\infty$. Indeed, note that since $\widehat{f}$ is continuous, we have that
    \begin{align*}
        \rho_{\p}(\widehat{f})&=\int_{-\infty}^\infty|\widehat{f}(x)|^{p(x)}dx\\
        &\leq \int_{\{|x|>1\}}\frac{(Vf)^{p(x)}}{|x|^{p(x)}}dx+\int_{\{|x|\leq1\}}\|\widehat{f}\|_{\infty}^{p(x)}dx.
    \end{align*}
  Therefore
\begin{equation*}
    \rho_{\p}(\widehat{f})\leq \max\{(Vf)^{p_{-}},(Vf)^{p_{+}}\}\int_{|x|>1}\frac{1}{|x|^{p_{-}}}dx+2\max\{\|\widehat{f}\|_{\infty}^{p_{-}},\|\widehat{f}\|_{\infty}^{p_{+}}\}<\infty,
\end{equation*}
and so $\widehat{f}\in\Lp$.
\end{example}

\section{Integration in \texorpdfstring{$\mathcal{A}_{\p}(\R)$}{}}\label{sec-integration-fourier}

In this section we define a way to integrate the Fourier transform of $f\in\Lp$, and prove certain formulas involving such integrals which extend the case where $\widehat{f}$ is integrable, such as an exchange and inversion formula.

Due to the fact that $\Psi_f$ is continuous, $\widehat{f}$ is integrable in the sense of the continuous primitive integral, see \cite{Talvila2007}. According to this theory, for $f=F'$, where $F$ is a continuous functions on an interval $[a,b]$, then $\int_a^b F'g=F(b)g(b)-F(a)g(a)-\int_a^bF(x)dg(x)$, where $g\in \mathcal{B}V([a,b])$ and the last integral is a Henstock-Stieltjes integral. Here $\mathcal{B}V(I)$ denotes the set of functions of bounded variation on a closed interval $I\subset \R$. Also, for $g\in\mathcal{B}V(\R)$, let $g(\infty)\defeq\lim_{x\to\infty}g(x)$, and similarly for $g(-\infty)$. Following the notation in \cite{Talvila2007}, we  indicate Lebesgue/Henstock-Stieltjes integrals by explicitly stating the measure and variable of integration, while continuous primitive integrals will show neither.

\begin{definition}\label{definitionintegral}
     Consider $\p\in\mathcal{P}(\R)$ such that $p_{+}<\infty$ or $1/\p$ is $\LH$, and $f\in L^{\p}(\R)$.
    \begin{enumerate}
        \item For $-\infty<a<b<\infty$ and $g\in\mathcal{B}V(\R)$, define
        \begin{equation*}
\int_a^b\widehat{f}g=\int_a^b\Psi_f'g\defeq\Psi_f(b)g(b)-\Psi_f(a)g(a)-\int_a^b\Psi_f(t)dg(t).
        \end{equation*}
        \item Let $g\in\BV(\R)$ such that $g(x)=o(|x|^{-1/p_{-}})$ as $|x|\to \infty$. Then 
        \begin{equation*}
\int_{\infty}^{\infty}\widehat{f}g=\int_{-\infty}^{\infty}\Psi_f'g\defeq-\int_{-\infty}^\infty \Psi_f(t)dg(t).
        \end{equation*}
        \item Let $g\in\BV([\delta,a])$ for each $0<\delta<a<\infty$ such that $g(x)=o(x^{-1/p_{+}})$ as $x\to 0^+$ if $\p$ satisfies $p_+<\infty$, or $g(x)=o(x^{-1})$ as $x\to 0^+$ if $1/\p$ is $\LH$. Then 
        \begin{equation*}
            \int_0^a \widehat{f}g=\int_0^a \Psi'_fg\defeq\Psi_f(a)g(a)-\int_0^a\Psi_f(t)dg(t).
        \end{equation*}
    \end{enumerate}
\end{definition}

By the definition above, notice that in particular 
        \begin{equation*}
            \int_a^b\widehat{f}=\Psi_f(b)-\Psi_f(a).
        \end{equation*}
\begin{obs}
    This definition is consistent with the similar formulas which hold whenever $\widehat{f}$ is a function of appropriate regularity, and the integrals are defined in the classical sense.
\end{obs}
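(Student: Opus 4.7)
The plan is to verify the three consistency claims of Definition \ref{definitionintegral} one by one. Throughout I would make precise the informal phrase ``$\widehat{f}$ of appropriate regularity'' by assuming $\widehat{f}$ is identified with a (locally) integrable classical function, so that $\Psi_f$ is absolutely continuous on every compact interval with $\Psi_f' = \widehat{f}$ almost everywhere in the pointwise sense.

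The workhorse is the classical integration by parts formula for Lebesgue--Stieltjes integrals: for $g \in \BV(\R)$ and $-\infty < a < b < \infty$,
\[
\int_a^b \widehat{f}(s)\, g(s)\, ds \;=\; \Psi_f(b) g(b) - \Psi_f(a) g(a) - \int_a^b \Psi_f(s)\, dg(s).
\]
This is exactly the right-hand side of Definition \ref{definitionintegral}(1), which settles (1). For (2) I would apply this identity on $[-M,N]$ and let $M, N \to \infty$; the boundary terms vanish by combining the growth $|\Psi_f(s)| = O(|s|^{1/p_-})$ from Remark \ref{remark_asympt} with the hypothesis $g(s) = o(|s|^{-1/p_-})$. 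For (3) the argument is symmetric at the origin: apply the formula on $[\delta, a]$ and let $\delta \to 0^+$, the boundary term $\Psi_f(\delta) g(\delta)$ vanishing because the local estimate of Remark \ref{remark_asympt}---that is, $|\Psi_f(s)| = O(|s|^{1/p_+})$ if $p_+ < \infty$, or $O(|s|)$ if $1/\p$ is $\LH$---is exactly matched by the hypothesized decay of $g$ at $0$.

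The main obstacle is verifying convergence of the improper integrals in (2) and (3). For the Stieltjes side $\int \Psi_f\, dg$, convergence follows from the Hölder/Lipschitz continuity of $\Psi_f$ (the corollary at the end of Section \ref{sec-prop-psi}) together with $g \in \BV(\R)$. For the classical side $\int \widehat{f}\, g$ one typically needs additional regularity on $\widehat{f}$ (for example $\widehat{f} \in L^{\q}(\R)$, which combined with the decay of $g$ puts $\widehat{f} g$ in $L^1$ via the generalized Hölder inequality). The decay exponents imposed on $g$ are tailored precisely to the growth of $\Psi_f$, ensuring that both sides converge and agree in the limit.
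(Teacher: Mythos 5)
The paper offers no proof of this remark at all --- it is stated as a bare observation --- so there is nothing to compare against line by line; your proposal supplies the justification the authors left implicit, and it is the natural one. Your reading of ``appropriate regularity'' (that $\widehat{f}$ is a locally integrable function, so $\Psi_f$ is locally absolutely continuous with $\Psi_f'=\widehat{f}$ a.e.) is reasonable, and the core mechanism is correct: since $\Psi_f$ is continuous and locally of bounded variation, the Lebesgue--Stieltjes integration by parts formula holds on $[a,b]$ without correction terms for jumps of $g$, which gives case (1) verbatim; cases (2) and (3) then follow by letting the endpoints tend to $\pm\infty$ (resp.\ $0^+$) and killing the boundary terms by pairing the growth estimates $\Psi_f(s)=O(|s|^{1/p_-})$ at infinity and $O(|s|^{1/p_+})$ (or $O(|s|)$ in the $\LH$ case) at the origin from Remark \ref{remark_asympt} against the decay hypotheses on $g$ in Definition \ref{definitionintegral}. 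This is exactly why those decay exponents appear in the definition. One caveat: your claim that convergence of $\int_{-\infty}^{\infty}\Psi_f\,dg$ follows from the H\"older continuity of $\Psi_f$ together with $g\in\BV(\R)$ is not right as stated, since $\Psi_f$ is in general unbounded and $g\in\BV(\R)$ only gives $\int|dg|<\infty$; absolute convergence requires a weighted condition such as $\int_{-\infty}^{\infty}|s|^{1/p_-}|dg(s)|<\infty$, which the paper imposes only later (Theorem \ref{theo_exchange}), not in Definition \ref{definitionintegral}(2) itself. This does not affect the consistency claim --- whenever both sides are defined they agree --- but the sentence should be weakened accordingly.
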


\begin{theorem}[Exchange formula]\label{theo_exchange}
     Consider $\p\in\mathcal{P}(\R)$ such that $p_{+}<\infty$ or $1/\p$ is $\LH$, and $\q$ its conjugate exponent. Let $f\in \Lp$ and $g\in \BV(\R)$ with $g(\pm\infty)=0$, such that $\int_{-\infty}^{\infty}|s|^{1/p_{-}}|dg(s)|<\infty$. Then $\widehat{g}\in L^{\q}(\R)$ and 
    \begin{equation*}
        \int_{-\infty}^\infty\widehat{f}g=\int_{-\infty}^\infty f(s)\widehat{g}(s)ds.
    \end{equation*}
    Moreover, 
    \begin{equation}\label{inequality_statement_exchange}
        \left|\int_{-\infty}^\infty\widehat{f}g\right|\leq\|f\|_{\p}\left(K_1\int_0^M|s|^{1/r}|dg(s)|+K_2\int_{M}^{\infty}|s|^{1/p_{-}}|dg(s)|\right),
    \end{equation}
    for some $K_1,K_2>0$ and $r\geq 1$ depending only on $\p$.
\end{theorem}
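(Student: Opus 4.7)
Plan. I would start from Definition~\ref{definitionintegral}(2), which gives
\[\int_{-\infty}^{\infty}\widehat{f}g=-\int_{-\infty}^{\infty}\Psi_f(t)\,dg(t).\]
The inequality \eqref{inequality_statement_exchange} comes out directly: splitting the integration domain at $|t|=M$ and applying Proposition~\ref{prop_bounded} (or Proposition~\ref{coro_rapid_decay} in the log-Hölder case), on $|t|\le M$ one has $|\Psi_f(t)|\le K_1\|f\|_{\p}|t|^{1/r}$ (with $r=p_+$ in general and $r=1$ in the log-Hölder case), and on $|t|>M$ one has $|\Psi_f(t)|\le K_2\|f\|_{\p}|t|^{1/p_-}$. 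The hypothesis $\int|s|^{1/p_-}|dg(s)|<\infty$ controls the outer piece, while the inner piece is controlled because $|t|^{1/r}$ is bounded on $[-M,M]$. This simultaneously establishes $\int|\Psi_f(t)|\,|dg(t)|<\infty$, which is needed for the next step.

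Next, expanding $\Psi_f(t)=\int\frac{1-e^{-ivt}}{iv}f(v)\,dv$ and invoking Fubini--Tonelli (justified since the estimates above were obtained after taking absolute values inside the integral), the expression becomes
\[-\int_{-\infty}^\infty\Psi_f(t)\,dg(t)=-\int_{-\infty}^\infty f(v)H(v)\,dv,\qquad H(v)\defeq\int_{-\infty}^\infty\frac{1-e^{-ivt}}{iv}\,dg(t).\]
I would then identify $-H$ with $\widehat{g}$ in the sense of tempered distributions. For $\varphi\in\mathcal{S}(\R)$, applying Fubini to the elementary identity $\frac{1-e^{-ivt}}{iv}=\int_0^t e^{-ivr}\,dr$ yields $\int\frac{1-e^{-ivt}}{iv}\varphi(v)\,dv=\int_0^t\widehat{\varphi}(r)\,dr$. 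Integration by parts for the Henstock--Stieltjes integral, permissible because $g(\pm\infty)=0$ and $t\mapsto\int_0^t\widehat{\varphi}$ is $C^1$ and bounded (since $\widehat{\varphi}\in L^1(\R)$), then produces $\langle -H,\varphi\rangle=\int\widehat{\varphi}(t)g(t)\,dt=\langle\widehat{g},\varphi\rangle$.

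To conclude $\widehat{g}\in L^{\q}(\R)$, the inequality \eqref{inequality_statement_exchange} rewritten as $|\int f(-H)\,dv|\lesssim\|f\|_{\p}$ shows that $f\mapsto\int f(-H)\,dv$ is a bounded linear functional on $L^{\p}(\R)$. Under the duality $(L^{\p}(\R))^*\cong L^{\q}(\R)$, which holds under the theorem's assumptions, this forces $-H$ to coincide almost everywhere with an $L^{\q}$-function, which must equal $\widehat{g}$ by the distributional identification just established. The exchange formula then follows from the chain $\int\widehat{f}g=-\int\Psi_f\,dg=-\int fH=\int f\widehat{g}$, with the last integral a classical Lebesgue one by the generalized Hölder inequality.

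The main obstacle I anticipate is the duality step when $p_+=\infty$ is allowed (e.g.\ in the log-Hölder case), since there the identification $(L^{\p})^*=L^{\q}$ becomes subtle, mimicking the failure for $L^\infty$. A workaround would be to estimate $\|H\|_{\q}$ directly, either by Minkowski's integral inequality combined with Proposition~\ref{prop_bounded_-_+}-type bounds for the kernel $u_v(t)$ viewed as a function of $v$, or through a pointwise estimate $|H(v)|\le C|v|^{-1/q_+}$ obtained by interpolating $|u_v(t)|\le\min(|t|,2/|v|)\le|t|^{1/p_-}(2/|v|)^{1/q_+}$ against $\int|t|^{1/p_-}|dg(t)|<\infty$.
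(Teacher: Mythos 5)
Your proposal follows the paper's proof almost step for step: the same starting point in Definition \ref{definitionintegral}(2), the same Fubini--Tonelli step justified by the two-regime bounds on $\Psi_f$ from Remark \ref{remark_asympt} (which simultaneously yields \eqref{inequality_statement_exchange}), and the same duality-flavoured conclusion that $\widehat{g}\in L^{\q}(\R)$. The one genuine, though small, gap is that you invoke Definition \ref{definitionintegral}(2) without verifying its hypothesis: that definition only applies when $g(x)=o(|x|^{-1/p_{-}})$ as $|x|\to\infty$, and this decay is not assumed in the theorem --- it has to be derived from $g(\pm\infty)=0$ and $\int|s|^{1/p_{-}}|dg(s)|<\infty$. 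The paper does this via the Jordan decomposition $g=g_1-g_2$ with $g_1,g_2$ decreasing to $0$, using $0\le x^{1/p_{-}}g_1(x)\le-\int_x^\infty s^{1/p_{-}}dg_1(s)\to0$; you should include this step before the rest of your argument can start. Your identification of $-H$ with $\widehat{g}$ by testing against Schwartz functions is correct but longer than necessary: the paper simply integrates the inner Stieltjes integral by parts in $s$ for each fixed $t$, the boundary terms vanishing because $g(\pm\infty)=0$, which produces $-\widehat{g}(t)$ pointwise.

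Concerning the obstacle you anticipate: it does not arise. The paper does not use the full duality $(L^{\p})^*\cong L^{\q}$ (which indeed fails when $p_{+}=\infty$), but rather the associate-space norm characterization (Proposition 2.79 of \cite{Cruz-Uribe}), which states that a fixed measurable function $h$ with $\sup\{\int|fh|:\|f\|_{\p}\le1\}<\infty$ lies in $L^{\q}(\R)$ --- valid for arbitrary exponents. Since your functional is already represented by integration against the concrete function $-H$, this applies directly. Note also that your fallback pointwise bound $|H(v)|\le C|v|^{-1/q_{+}}$ would not by itself suffice: even for constant exponents, $|v|^{-1/q}$ fails to belong to $L^{q}$ at infinity, so a direct estimate of $\|H\|_{\q}$ would require combining the two competing bounds $|u_v(t)|\le|t|$ and $|u_v(t)|\le 2/|v|$ on different regions rather than a single interpolated one.
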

\begin{proof}
    Without loss of generality we may assume that $g$ has support contained in $[0,\infty]$. Since $g\in\BV(\R)$, we can write $g=g_1-g_2$, where $g_1,g_2:[0,\infty]\to \R$ decrease to $0$ and $g_1(x)+g_2(x)=Vg(x)$ (see \cite{Cruz-Uribe-Stieltjes}). Therefore 
    \begin{equation}\label{eq_equality_dgs}
\int_{0}^{\infty}|s|^{1/p_{-}}|dg(s)|=-\int_0^{\infty}s^{1/p_{-}}dg_1(s)-\int_0^\infty s^{1/p_{-}}dg_2(s).
    \end{equation}
    Notice that %$\lim_{x\to\infty}\int_x^\infty s^{1/p_{-}}dg_1(s)=0$. Also, 
    for $x>0$, we have that
    \begin{equation*}
        0\leq x^{1/p_{-}}g_1(x)\leq -\int_x^\infty s^{1/p_{-}}dg_1(s),
    \end{equation*}
    which tends to 0  as $x\to\infty$,
    since the integral is finite for every $x>0$ by \eqref{eq_equality_dgs}.
    Applying a similar argument to $g_2$, and using that $g=g_1+g_2$, we conclude that $g(s)=o(|s|^{1/p_{-}})$. Integrating as in Definition \ref{definitionintegral} we have that
    \begin{align*}
        \int_{-\infty}^\infty\widehat{f}g=\int_{-\infty}^\infty\Psi_f'g&=-\int_{-\infty}^\infty\Psi_f(s)dg(s)\\
        &=-\int_{-\infty}^\infty\int_{-\infty}^\infty\left(\frac{1-e^{-ist}}{it}\right)f(t){d}t\, dg(s)\\
        &=-\int_{-\infty}^\infty f(t)\int_{-\infty}^\infty\left(\frac{1-e^{-ist}}{it}\right)dg(s)dt.
    \end{align*}
    Where in the last line we applied Fubini's theorem,  justified by the fact that
\begin{equation}\label{bound_exchange}
        \int_{-\infty}^\infty|\Psi_f(s)||dg(s)|\leq \|f\|_{\p}\left(K_1\int_0^M|s|^{1/r}|dgs(s)|+K_2\int_{M}^{\infty}|s|^{1/p_{-}}|dg(s)|\right)<\infty,
    \end{equation}
    for some $K,M>0$ and $r\geq 1$ depending on $\p$, where we have used the fact that $|\Psi_f(s)|\leq K\|f\|_{\p}|s|^{\frac{1}{p_{-}}}$ for large $|s|$, and that $|\Psi_f(s)|\leq K\|f\|_{\p}|s|^{\frac{1}{p_{+}}}$ or $|\Psi_f(s)|\leq K\|f\|_{\p}|s|$ for small $|s|$. 
    Then, upon integrating by parts, we have that
    \begin{align*}
        \int_{-\infty}^\infty\widehat{f}g&=-\int_{-\infty}^\infty f(t)\left\{\left[\left(\frac{1-e^{-ist}}{it}\right)g(s)\right]_{s=-\infty}^{s=\infty}-\int_{-\infty}^\infty e^{-ist}g(s)ds\right\}dt\\
        &=\int_{-\infty}^{\infty}f(t)\widehat{g}(t)dt.
    \end{align*}
    Inequality \eqref{inequality_statement_exchange} then follows from \eqref{bound_exchange}. Then notice that inequality \eqref{inequality_statement_exchange} implies that the function $g$ defines a bounded linear functional on $\Lp$ via 
    \begin{equation*}
        f\mapsto\int_{-\infty}^\infty\widehat{f}g\equiv\int_{-\infty}^\infty\Psi_f(s)dg(s),
    \end{equation*}and therefore by Proposition 2.79 in \cite{Cruz-Uribe}, we have that $g\in L^{\q}(\R)$.
\end{proof}
\begin{example}
    Suppose $1<p_{-},p_{+}<\infty$. Let 
    \begin{equation*}
        g(x)=\begin{cases}
            x^{-\alpha}&\text{if}\quad x>1,\\
            0&\text{otherwise,}
        \end{cases}
    \end{equation*}
    where $\alpha p_{-}>1$ and $0<\alpha<1$. Then $g\in \Lp\cap \BV(\R)$. Indeed, it is easy to verify that $g\in \BV(\R)$. Also, note that
    \begin{align*}
        \rho_{\p}(g)&= \int_1^\infty x^{-\alpha p(x)}dx\\
        &\leq \int_1^\infty x^{-\alpha p_{-}}dx\\
        &=\frac{1}{\alpha p_{-}-1}<\infty.
    \end{align*}
    Therefore $g\in\Lp$. Furthermore, notice that
    \begin{equation*}
        \int_{-\infty}^\infty |s|^{1/p_{-}}|dg(s)|=\alpha\int_1^{\infty}s^{-(\alpha+1-1/p_{-})}ds<\infty.
    \end{equation*}
   It follows from Theorem \ref{theo_exchange} that $\widehat{g}\in L^{q(\cdot)}(\R)$. Indeed we can verify this directly: for $s>0$ we have that
    \begin{equation*}
        \widehat{g}(s)=s^{\alpha-1}\int_s^\infty e^{-it}t^{-\alpha}dt\sim s^{\alpha-1}\int_0^\infty e^{-it}t^{-\alpha}dt,
    \end{equation*}
     as $s\to 0^+$. Integration by parts twice shows that $\widehat{g}(s)=e^{-is}/(is)+O(s^{-2})$ as $s\to \infty$.
    Therefore, setting $K=\left|\int_0^\infty e^{-it}t^{-\alpha}dt\right|<\infty$, we have that
    \begin{align*}
        \rho_{\q}(\widehat{g})&=\int_{-\infty}^\infty|\widehat{g}(s)|^{q(s)}ds\\
        &\sim \int_{|s|\leq1}|s|^{(\alpha-1)q(s)}K^{q(s)}ds+\int_{|s|>1}|s|^{-q(s)}ds+K'\\
        &=\max\{K^{q_{-}},K^{q_{+}}\}2\int_0^1 s^{(\alpha-1)q_{+}}ds+2\int_1^\infty s^{-q_{-}}ds+K',
    \end{align*}
    for some $K'<\infty$. But note that since $\alpha p_{-}>1$ and $1/p_{-}+1/q_{+}=1$, we have that $0>(\alpha-1)q_{+}>-1$. Also, $p_{+}<\infty\implies q_{-}>1$, therefore both integrals above are finite and $\widehat{g}\in L^{\q}(\R)$. 
\end{example}

\begin{theorem}[Fourier inversion in norm]\label{theo_inversion}
     Consider $\p\in\mathcal{P}(\R)$ such that $p_{+}<\infty$ or $1/\p$ is $\LH$, and $f\in \Lp$. Let $\psi\in L^1(\R)$ be such that $\int_{\infty}^\infty\psi(t)dt=1$, $\widehat{\psi}$ is absolutely continuous, $\int_{-\infty}^\infty|s|^{1/p_{-}}|\widehat{\psi}(s)|ds<\infty$ and $\int_{-\infty}^\infty|s|^{1/p_{-}}|\widehat{\psi}'(s)|ds<\infty$. For each $a>0$, define $\psi_a(x)=\psi(x/a)/a$. Consider the family of kernels $K_a(s)=\widehat{\psi_a}(s)$. Also, let $e_x(s)\defeq e^{ixs}$ and $I_a[f](x)=(2\pi)^{-1}\int_{-\infty}^\infty e_x K_a\widehat{f}$. Then 
    \begin{equation*}
        \lim_{a\to 0^{+}}\|f-I_a[f]\|_{\p}=0.
    \end{equation*}
\end{theorem}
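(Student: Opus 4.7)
The plan is to reduce the theorem to a classical approximate-identity statement by identifying $I_a[f]$ with the convolution $f * \psi_a$ and then proving $\|\psi_a * f - f\|_{\p} \to 0$ via a standard density + uniform bound argument. To carry out the first reduction, I would apply Theorem \ref{theo_exchange} to $g_x(s) \defeq e^{ixs} K_a(s)$ for fixed $x$. The weighted integrability $\int |s|^{1/p_-}|\widehat{\psi}(s)|\,ds < \infty$ together with continuity of $\widehat{\psi}$ forces $\widehat{\psi} \in L^1(\R)$, so $K_a = \widehat{\psi}(a\cdot) \in L^1(\R)$ and $K_a(\pm\infty) = 0$ by Riemann--Lebesgue. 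Absolute continuity of $\widehat{\psi}$ and a change of variables then yield
\[
\int_{-\infty}^{\infty} |s|^{1/p_-}|dg_x(s)| \leq |x|\int|s|^{1/p_-}|\widehat{\psi}(as)|\,ds + a\int|s|^{1/p_-}|\widehat{\psi}'(as)|\,ds < \infty,
\]
verifying the hypotheses of the exchange formula. It then gives $I_a[f](x) = \frac{1}{2\pi}\int f(t)\,\mathcal{F}(g_x)(t)\,dt$, and a direct calculation using classical Fourier inversion (valid since $K_a, \psi_a \in L^1$, with the paper's conventions $\widehat{\widehat{\varphi}}(x) = 2\pi\varphi(-x)$) yields $\mathcal{F}(e_x K_a)(t) = 2\pi\,\psi_a(x-t)$; hence $I_a[f] = f * \psi_a$.

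With this identification, the theorem reduces to $\|\psi_a * f - f\|_{\p} \to 0$ as $a \to 0^+$, which I would establish in three steps: (i) a uniform Young-type bound $\|\psi_a * g\|_{\p} \leq C\|g\|_{\p}$ independent of $a$ (with $C$ depending only on $\|\psi\|_{L^1} = \|\psi_a\|_{L^1}$ and on $\p$); (ii) on a dense subspace $\mathcal{D} \subset L^{\p}(\R)$ of nicely controlled functions, verifying $\psi_a * \varphi \to \varphi$ in $L^{\p}$-norm by direct inspection, exploiting uniform convergence of $\psi_a*\varphi$ to $\varphi$ together with the fact that their supports stay inside a fixed bounded set; and (iii) an $\varepsilon/3$ argument: for $\varepsilon > 0$ and $\varphi \in \mathcal{D}$ with $\|f - \varphi\|_{\p} < \varepsilon$,
\[
\|f - \psi_a * f\|_{\p} \leq (1+C)\|f - \varphi\|_{\p} + \|\varphi - \psi_a * \varphi\|_{\p} \leq (1+C)\varepsilon + o(1)
\]
as $a \to 0^+$, which gives the conclusion after letting $\varepsilon \to 0$.

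The main obstacle will be step (i): Young's inequality $L^{\p} * L^1 \to L^{\p}$ is immediate for constant exponents and well known under log-Hölder continuity of $\p$, but under the weaker hypotheses here ($p_+ < \infty$ or $1/\p$ being $\LH$) it is not a one-line invocation; one natural route is to argue by duality, estimating $|\int(\psi_a*g)h|$ for $\|h\|_{\q}\leq 1$ by Fubini and moving the convolution onto $h$, using the generalized Hölder inequality together with a companion estimate for the conjugate exponent. A secondary delicate point is density of $\mathcal{D}$ when $p_+ = \infty$, where continuous compactly supported functions are not dense in $L^{\p}(\R)$; here I would exploit the hypothesis that $1/\p$ is $\LH$, which forces $p(x) \to 1$ as $|x|\to\infty$, making $L^{\p}$ behave like $L^1$ outside a large ball and permitting a truncation-and-mollification density argument.
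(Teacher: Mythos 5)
Your reduction of $I_a[f]$ to the convolution $f*\psi_a$ is essentially the paper's own first step: the paper likewise verifies that $g(s)=e^{ixs}K_a(s)$ falls under Theorem \ref{theo_exchange} (using $\widehat{\psi}\in L^1(\R)$, $\widehat{\psi}(s)=o(|s|^{1/p_{-}})$ and the two weighted integrability hypotheses) and then identifies $(2\pi)^{-1}\widehat{K_a}(t-x)$ with $\psi_a(x-t)$. That half of your plan is correct.

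The two arguments diverge at the approximate-identity step, and there your plan has a genuine gap. The paper does not prove $\|f*\psi_a-f\|_{\p}\to 0$ by a uniform bound plus density; it invokes the approximate-identity theorem \cite[Theorem 5.4]{Cruz-Uribe}, and it does so precisely because your step (i) is unavailable. A bound $\|\psi*g\|_{\p}\le C\|g\|_{\p}$ with $C$ depending only on $\|\psi\|_{L^1}$ and on $\p$ is Young's inequality for $L^{\p}*L^1$, which is known to fail for nonconstant exponents. More to the point, even the weaker uniform-in-$a$ bound for the single dilated family $\{\psi_a\}$ fails under the hypotheses of the theorem: take $p(x)=2$ for $x<0$ and $p(x)=4$ for $x>0$ (so $p_{+}<\infty$), $f(x)=|x|^{-1/3}\chi_{(-1,0)}(x)\in\Lp$, and let $\psi\ge 0$ be a smooth bump supported in $(1/4,3/4)$ with $\int\psi=1$, so that $\widehat{\psi}$ is Schwartz and every hypothesis on $\psi$ holds. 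For $x\in(0,a/8)$ each point $x-y$ with $y$ in the support of $\psi_a$ lies in $(-3a/4,-a/8)$, hence $f(x-y)\ge a^{-1/3}$ and $\psi_a*f(x)\ge a^{-1/3}$; integrating the fourth power over $(0,a/8)$ gives $\|\psi_a*f\|_{\p}\gtrsim a^{-1/12}\to\infty$. Since norm convergence for every $f$ would, by uniform boundedness, force the uniform operator bound, no $\varepsilon/3$ scheme can close here; and the duality variant you propose only transfers the identical convolution estimate onto the conjugate exponent $\q$, so it is circular. Any working proof must exploit both the concentration of $\psi_a$ at the origin and some pointwise regularity of $\p$ beyond $p_{+}<\infty$ — that is exactly what is packaged into the cited Theorem 5.4 of \cite{Cruz-Uribe}, whose proof does not pass through a Young-type bound.

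Your secondary worry about density when $p_{+}=\infty$ is also well founded, but the repair you sketch does not work: the condition $1/\p\in\LH$ permits $p\equiv\infty$ on a bounded set of positive measure, and on that set the modular carries an $L^\infty$ component in which compactly supported continuous functions are not dense; the behaviour of $\p$ outside a large ball is irrelevant to this obstruction.
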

\begin{proof}
    First notice that since $\widehat{\psi}$ is absolutely continuous, we have that $\widehat{\psi}\in\BV(\R)$. Also, by the same argument as in the proof of Theorem \ref{theo_exchange}, we have that $\widehat{\psi}(s)=o(|s|^{1/p_{-}})$. Therefore, by a proof similar to that of Theorem \ref{theo_exchange}, we have that the exchange formula applies to the function $g(s)= e^{ixs}K_a(s)$. Notice also that by our assumptions we have that $\widehat{\psi}\in L^1(\R)$.  Then 
    \begin{align*}
        I_a[f](x)&=(2\pi)^{-1}\int_{-\infty}^{\infty}f(t)\widehat{K_a}(t-x)dt\\
        &=\int_{-\infty}^{\infty}f(t)\psi_a(x-t)dt\\
        &=f*\psi_a(x).
    \end{align*}
    Theorem 5.4 in \cite{Cruz-Uribe} then implies that 
    \begin{equation*}
        \lim_{a\to 0^+}\|f-I_a[f]\|_{\p}=0,
    \end{equation*}
    as claimed.
\end{proof}
\begin{obs}
    As mentioned in \cite{Talvila2025}, the following three commonly used kernels satisfy the conditions from Theorem \ref{theo_inversion}:\\
    The Cesàro-Fejér kernel
    \begin{equation*}
        K_a(s)=(2\pi)^{-1}(1-a|s|)\chi_{[-1/a,1/a]}(s),
    \end{equation*}
    the Abel-Poisson kernel
    \begin{equation*}
        K_a(s)=\pi^{-1}e^{-a|s|},
    \end{equation*}
    and the Gauss-Weirstrass kernel
    \begin{equation*}
        K_a(s)=(2\pi)^{-1}e^{-a^2s^2}.
    \end{equation*}
\end{obs}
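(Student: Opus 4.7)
My approach is to verify, one kernel at a time, the five hypotheses of Theorem \ref{theo_inversion}: $\psi\in L^1(\R)$, $\int_{-\infty}^{\infty}\psi(t)\,dt=1$, absolute continuity of $\widehat\psi$, and finiteness of the two moment integrals $\int_{-\infty}^{\infty}|s|^{1/p_{-}}|\widehat\psi(s)|\,ds$ and $\int_{-\infty}^{\infty}|s|^{1/p_{-}}|\widehat\psi'(s)|\,ds$. The key observation is that since $K_a(s)=\widehat{\psi_a}(s)=\widehat\psi(as)$, one reads off $\widehat\psi(s)=K_1(s)$ directly from the stated formulas, obtaining $\widehat\psi(s)=(2\pi)^{-1}(1-|s|)\chi_{[-1,1]}(s)$ for the Cesàro-Fejér case, $\widehat\psi(s)=\pi^{-1}e^{-|s|}$ for Abel-Poisson, and $\widehat\psi(s)=(2\pi)^{-1}e^{-s^{2}}$ for Gauss-Weierstrass.

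First I would recover $\psi$ by inverse Fourier transform in each case and observe that these are standard positive functions in $L^1(\R)$ (the Fejér function $\sin^{2}(x/2)/(\pi^{2}x^{2})$, the Poisson function $1/(\pi^{2}(1+x^{2}))$, and a scaled Gaussian $e^{-x^{2}/4}/(4\pi^{3/2})$), whose integrals equal $\widehat\psi(0)$; after a harmless renormalization (rescaling $\psi$ and $K_a$ by the same multiplicative constant, which preserves every other hypothesis) one achieves $\int\psi=1$.

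Next I would verify the regularity of $\widehat\psi$. The Fejér $\widehat\psi$ is continuous and piecewise linear on $[-1,1]$ with zero extension, hence absolutely continuous, with weak derivative $\widehat\psi'(s)=-(2\pi)^{-1}\operatorname{sgn}(s)\chi_{[-1,1]}(s)$. The Poisson $\widehat\psi$ is continuous and $C^{1}$ away from the origin, with the bounded a.e.\ derivative $\widehat\psi'(s)=-\pi^{-1}\operatorname{sgn}(s)e^{-|s|}$, so it is absolutely continuous. The Gauss-Weierstrass $\widehat\psi$ is $C^{\infty}$, with $\widehat\psi'(s)=-\pi^{-1}se^{-s^{2}}$.

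Finally, the two moment integrals are immediate. For the Fejér kernel, both integrands are bounded and supported in $[-1,1]$, so each integral is dominated by a multiple of $\int_{-1}^{1}|s|^{1/p_{-}}\,ds<\infty$. For the Abel-Poisson and Gauss-Weierstrass kernels, $\widehat\psi$ and $\widehat\psi'$ both decay exponentially or super-exponentially, so each integral is dominated by an expression of the form $\int_{0}^{\infty}s^{\alpha}e^{-s}\,ds=\Gamma(\alpha+1)$ (with $\alpha=1/p_{-}$ or $\alpha=1/p_{-}+1$), which is finite since $p_{-}\geq 1$. There is no genuine obstacle here beyond the bookkeeping of normalization constants in Step~1; the analytic content reduces to the observation that exponential or compactly supported decay dominates polynomial growth of order at most $1/p_{-}+1\leq 2$.
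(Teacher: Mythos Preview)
The paper does not supply a proof for this remark; it merely cites \cite{Talvila2025}. Your direct verification is therefore not being compared against an existing argument but rather fills in what the paper leaves to the reference, and it is correct.

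Your approach---reading off $\widehat\psi=K_1$ from the dilation relation $\widehat{\psi_a}(s)=\widehat\psi(as)$, recovering $\psi$ by inverse transform, and then checking absolute continuity and the two moment conditions---is exactly what is required, and the decay estimates you give (compact support for Fej\'er, exponential for Abel--Poisson, Gaussian for Weierstrass) are more than enough to control the polynomial weight $|s|^{1/p_-}$ since $1/p_-\le 1$. One point worth making explicit: as you noticed, the kernels in the remark satisfy $K_1(0)=\widehat\psi(0)=\int\psi\in\{(2\pi)^{-1},\pi^{-1}\}$ rather than $\int\psi=1$, so strictly speaking the hypothesis of Theorem~\ref{theo_inversion} is met only after the rescaling you propose. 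This is a genuine normalization mismatch in the statement of the remark (inherited from the cited source and its Fourier conventions), not a flaw in your argument; your ``harmless renormalization'' is the correct fix and does not affect any of the other hypotheses.
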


\begin{theorem}
    Let $f\in\Lp$, with $\p\in\mathcal{P}(\R)$ such that $p_{+}<\infty$ or $1/\p$ is $\LH$. Suppose $g_1\in L^1(\R)\cap\BV(\R)$ is such that $\int_{-\infty}^\infty|s|^{1/p_{-}}|dg_1(s)|<\infty$. Suppose also that $g_2\in L^1(\R)$ satisfies  $\int_{-\infty}^\infty|s|^{1/p_{-}}|g_2(s)|ds<\infty$. Then 
    \begin{equation}
        \int_{-\infty}^\infty \widehat{f}g_1*g_2=\int_{-\infty}^\infty f(s)\widehat{g_1}(s)\widehat{g_2}(s)ds.
    \end{equation}
    \end{theorem}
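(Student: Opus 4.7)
The plan is to reduce the problem to the exchange formula (Theorem~\ref{theo_exchange}) applied to $g = g_1 * g_2$, followed by the classical identity $\widehat{g_1 * g_2} = \widehat{g_1}\,\widehat{g_2}$, which is valid since both $g_1$ and $g_2$ lie in $L^1(\R)$. Thus the bulk of the work is to verify the three hypotheses of Theorem~\ref{theo_exchange} for $g = g_1 * g_2$: membership in $\BV(\R)$, vanishing at $\pm\infty$, and finiteness of $\int_{-\infty}^\infty |s|^{1/p_-}\,|d(g_1*g_2)(s)|$.

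For the first two conditions, a direct partition-sum estimate inside the convolution integral, followed by Fubini, yields $V(g_1*g_2) \leq Vg_1\cdot\|g_2\|_{L^1} < \infty$, so $g_1*g_2 \in \BV(\R)$. Young's inequality gives $g_1*g_2 \in L^1(\R)$, and since any $\BV$ function has limits at $\pm\infty$, which must be zero for an $L^1$ function, we conclude $(g_1*g_2)(\pm\infty) = 0$.

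For the weighted variation bound, I would use that $g_1\in\BV(\R)$ induces a finite signed Borel measure $dg_1$ with $|dg_1|(\R) = Vg_1$, and that convolution with the $L^1$ function $g_2$ produces an absolutely continuous function with derivative $(g_1*g_2)'(s) = \int_{-\infty}^\infty g_2(s-y)\,dg_1(y)$. Consequently $|d(g_1*g_2)(s)| \leq \bigl(\int_{-\infty}^\infty |g_2(s-y)|\,|dg_1|(y)\bigr)\,ds$, and applying Fubini, the substitution $t = s-y$, and the subadditivity $|t+y|^{1/p_-} \leq |t|^{1/p_-} + |y|^{1/p_-}$ (valid because $1/p_- \leq 1$) yields
\begin{equation*}
\int_{-\infty}^\infty |s|^{1/p_-}\,|d(g_1*g_2)(s)| \leq Vg_1\int_{-\infty}^\infty |t|^{1/p_-}|g_2(t)|\,dt + \|g_2\|_{L^1}\int_{-\infty}^\infty |y|^{1/p_-}\,|dg_1(y)|,
\end{equation*}
which is finite by hypothesis. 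I expect the main technical subtlety to be the identification $d(g_1*g_2) = (dg_1*g_2)\,ds$, which amounts to recognizing that convolution of a finite signed measure with an $L^1$ function produces an absolutely continuous function; this is classical but deserves a careful Fubini/distributional justification. Once these hypotheses are in place, Theorem~\ref{theo_exchange} delivers $\int_{-\infty}^\infty \widehat{f}(g_1*g_2) = \int_{-\infty}^\infty f(s)\,\widehat{g_1*g_2}(s)\,ds$, and the classical convolution formula $\widehat{g_1*g_2} = \widehat{g_1}\,\widehat{g_2}$ closes the argument.
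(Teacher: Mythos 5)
Your proposal is correct and follows essentially the same route as the paper, which for this theorem simply defers to the proof of Theorem 7.4 in \cite{Talvila2025}: verify that $g_1*g_2$ satisfies the hypotheses of the exchange formula (Theorem \ref{theo_exchange}) and then invoke $\widehat{g_1*g_2}=\widehat{g_1}\,\widehat{g_2}$ for $L^1$ functions. Your explicit verification of $V(g_1*g_2)\leq Vg_1\,\|g_2\|_{L^1}$, the vanishing at $\pm\infty$, and the weighted variation bound via $d(g_1*g_2)=\bigl(\int g_2(\cdot-y)\,dg_1(y)\bigr)ds$ and the subadditivity of $t\mapsto|t|^{1/p_-}$ is sound and in fact supplies the details the paper leaves to the citation.
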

    \begin{proof}
        The proof follows the same arguments as the the proof of Theorem 7.4 in \cite{Talvila2025}.
    \end{proof}

\section*{Acknowledgements}

The authors would like to thank professor Alexandre Kirilov for his assistance, opinions and insights during the development of this paper.


\begin{thebibliography}{99}

\bibitem{Chamorro} Chamorro, D. and  Vergara-Hermosilla, G. {Lebesgue spaces with variable exponent: some applications to the Navier–Stokes equations},  {\it Positivity} \textbf{28} (2024), no. 24.

\bibitem{Cruz-Uribe-Stieltjes}
Cruz-Uribe, D. V. and Convertito, G.
\textit{The Stieltjes Integral}, 1st ed., Chapman and Hall/CRC, 2023.

\bibitem{Cruz-Uribe} 
Cruz-Uribe, D. V. and Fiorenza, A.  
\textit{Variable Lebesgue Spaces: Foundations and Harmonic Analysis}, Birkhäuser/Springer, Heidelberg, 2013.

\bibitem{CUHM16}
Cruz-Uribe, D., Hernández, E., and Martell, J. M.
Greedy bases in variable Lebesgue spaces, 
\textit{Monatsh. Math.} \textbf{179}(3) (2016), 355–378. 



\bibitem{Diening2004}
Diening, L. 
Maximal function on generalized Lebesgue spaces $L^{p(\cdot)}$,
\textit{Math. Inequal. Appl.} \textbf{7} (2004), no. 2, 245–253.

\bibitem{Diening2004b}
Diening, L. 
Riesz potential and Sobolev embeddings of generalized Lebesgue and Sobolev spaces $L^{p(\cdot)}$ and $W^{k,p(\cdot)}$,
\textit{Math. Nachr.} \textbf{263} (2004), no. 1, 31–43.

\bibitem{key-estimate}
Diening, L. and Schwarzacher, S. 
On the Key Estimate for Variable Exponent Spaces,
\textit{Azerbaijan J. Math.} \textbf{3} (2013), no. 2, 62-69.


\bibitem{Hasto2009} 
Hästö, P.  
Local-to-global results in variable exponent spaces, 
\textit{Math. Res. Lett.} \textbf{16} (2009), no. 2, 263–278.

\bibitem{KovacikRakosnik1991} 
Kováčik, O. and Rákosník, J.
On spaces $L^{p(x)}$ and $W^{k,p(x)}$, 
\textit{Czech. Math. J.} \textbf{41}(116) (1991), 592–618.

\bibitem{Orlicz1932} 
Orlicz, W.
Über eine gewisse Klasse von Räumen vom Typus B, 
\textit{Bull. Int. Acad. Pol. Sci. Ser. A} \textbf{8} (1932), 207–220.

\bibitem{Talvila2007}
Talvila, E. 
The distributional Denjoy integral, 
\textit{Real Anal. Exchange} \textbf{33} (2007), no. 1, 51–84.

\bibitem{Talvila2025}
Talvila, E. 
The Fourier Transform in Lebesgue spaces, 
\textit{Czech. Math. J.} \textbf{75} (2025), 179–191.

\bibitem{Sam98}
Samko, S. G.
Convolution and potential type operators in $L^{p(x)}(\mathbb{R}^n)$, 
\textit{Integral Transforms and Special Functions} \textbf{7}(3--4) (1998), 261--284. 

\bibitem{heat_equation}
Vergara-Hermosilla, G. 
On variable Lebesgue spaces and generalized nonlinear heat equations, 
\textit{Preprint}, arXiv:2404.09588, 2024.

\bibitem{log-2019}
Yang, S., Yang, D. and Yuan, W. 
New characterizations of Musielak-Orlicz-Sobolev spaces via sharp ball averaging functions,
\textit{Front. Math. China} \textbf{14} (2019), 177–201.

\end{thebibliography}
\end{document}